\renewcommand{\X}{\tilde{X}}
\renewcommand{\Pproj}{\Proj}
\newcommand{\D}{\rm D}
\begin{document}
	\title{A semi-orthogonal decomposition theorem for weighted blowups}
	\author{Oliver Li}
    \address{Department of Mathematics \\
	University of Melbourne \\
	Parkville VIC 3052, Australia }
    \email{oli1@unimelb.edu.au}
    
	\begin{abstract}
		We establish a semi-orthogonal decomposition for the weighted blowup of an algebraic stack along a Koszul-regular weighted centre, generalising the classic result of Orlov. Our approach is based on the work of Bergh-Schn\"urer.
	\end{abstract}
	\maketitle
	
	\section{Introduction}
	Weighted (stacky) blowups have been getting a lot of attention in recent years, prominently as a tool for improving the efficiency of classical algorithms. For example, they have been used by Abramovich, Temkin and Włodarczyk in \cite{Abramovich_2024} to improve Hironaka's resolution of singularities in characteristic zero. Examples of weighted blowups include usual blowups and the root stack construction of Cadman \cite{root_stax}. We refer the reader to \cite{QuekRydh} for more background.
	
	A natural question to ask is how a given invariant changes after a weighted blowup. For the integral Chow ring, this has been answered by Arena, Obinna and Abramovich in \cite{chow_ring_blowups}. As the first result of this paper, we consider various (quasi-)coherent derived categories. We prove:
	\begin{theorem}\label{thm-main}
		Fix $\mathbf{d} =(d_0,...,d_n)$ a tuple of positive integers. Let $X$ be a quasi-compact algebraic stack and let $\I_\bullet$ be a Koszul-regular weighted centre on $X$, that smooth-locally is of the form $\sum_{i = 0}^n (f_i, d_i)$. Let $\X $ be the blowup of $X$ along $\cal{I}_\bullet$ and let $E$ denote the exceptional divisor. Write $Z_i = V(\cal{I}_i)$. We have a commutative diagram: \[\begin{tikzcd}
			E \arrow[r, hook,"j"]\arrow[d, "p"] & \tilde{X} \arrow[d, "\pi"] \\
			Z _1\arrow[r,hook, "i"] & X.
		\end{tikzcd} \] 
		Let $\rm{D}$ denote one of $\Dqc$ or $\Perf$ (i.e. perfect complexes). Write $\Phi_0^\rm{D}$ for the functor $L\pi^*: \rm{D}(X) \rightarrow \Dqc(\X)$ and for any $r < 0$ write $\Phi_r^\D$ for the functor \[\F \mapsto j_*(p^*(\F) \otimes \O_{E}(r)): \D(Z_1)\rightarrow \Dqc(\X). \]Then there is a semi-orthogonal decomposition with $|\mathbf{d}| := \sum d_i$ summands: \[\rm{D}(\tilde{X}) = \langle \im \Phi^\D_{1-|\mathbf{d}|},\dots,\im \Phi^\D_{-1}, \im \Phi^\D_0 \rangle. \] Moreover, if $X$ is locally noetherian, then the statement also holds for $\mathrm{D} =\DCoh^-$ and $\rm{D}=\DbCoh$.
	\end{theorem}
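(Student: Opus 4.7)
The plan is to follow the descent-based approach of Bergh--Schn\"urer: first establish the decomposition in a universal local model, and then deduce the general statement by conservative smooth descent for semi-orthogonal decompositions. Since $\cal{A}$ is smooth-locally of the form $\sum_{i=0}^n(f_i,d_i)$, after smooth base change along such a chart the entire diagram --- together with $E$, $p$, $j$, $\pi$ --- becomes the universal weighted blowup of $\mathbb{A}^{n+1}_{\mathbb{Z}}$ at the origin with weights $\mathbf{d}$. Each functor $\Phi_r^\D$ is built from a flat pullback, a twist by a line bundle and a proper pushforward, and hence commutes with smooth base change; combined with the Bergh--Schn\"urer descent package this reduces the theorem to the universal local case. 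The extension to $\DCoh^-$ and $\DbCoh$ over locally noetherian $X$ follows from the same descent, once one checks that the functors preserve (bounded) coherence --- a consequence of the properness of $\pi$ and $j$.

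\textbf{The local model.} In the universal case, $Z_1$ is the origin of $\mathbb{A}^{n+1}$ and $p:E\to Z_1$ identifies $E$ with the weighted projective stack $\cal{P}(\mathbf{d})$, with $\O_{\tilde X}(-E)|_E \cong \O_E(1)$. The key inputs are the Beilinson-type exceptional collection $\D(\cal{P}(\mathbf{d})) = \langle \O(1-|\mathbf{d}|),\dots,\O(-1),\O\rangle$ and the cohomology vanishings $R\Gamma(\cal{P}(\mathbf{d}),\O(k)) = 0$ for $1-|\mathbf{d}|\leq k\leq -1$. Fully faithfulness of $\Phi_0^\D = \pi^*$ reduces to $R\pi_*\O_{\tilde X} = \O_X$, which follows from the nonnegative cohomology of $\cal{P}(\mathbf{d})$. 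Fully faithfulness of $\Phi_r^\D$ for $r<0$ boils down, via the normal bundle formula for $Lj^*j_*$ and the projection formula along $p$, to the vanishing $Rp_*\O_E(-1) = 0$. Semi-orthogonality among the $\Phi_r^\D$ with $r<0$ reduces to the cohomology vanishings on $\cal{P}(\mathbf{d})$, while orthogonality with $\Phi_0^\D$ uses in addition a Koszul-type resolution of $i_*\O_{Z_1}$ extracted from the Rees algebra data.

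\textbf{Generation and the main obstacle.} For generation, I would show that any $\cal{F}$ in the right orthogonal to all summands has vanishing $Rp_*$ on its restrictions to the infinitesimal neighbourhoods of $E$, and then iterate using the short exact sequences $0 \to \O_{\tilde X}(-(k+1)E) \to \O_{\tilde X}(-kE) \to j_*\O_E(k) \to 0$ together with the Beilinson collection on $E$ to conclude $\cal{F} = 0$. The main obstacle is leveraging Koszul-regularity of $\cal{A}$ to obtain the base-change identities and vanishing theorems that in the unweighted case follow from regularity of the defining ideal: the classical Koszul complex of the ideal must be replaced by a weighted analogue respecting the grading of the Rees algebra, and it is this analogue that ultimately produces the $|\mathbf{d}|$ summands (rather than $n+1$ or $\operatorname{codim}(Z_1/X)$, as in the unweighted setting). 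Verifying that this weighted resolution supports the Bergh--Schn\"urer descent machinery uniformly across $\Dqc$, $\Perf$, $\DCoh^-$ and $\DbCoh$ is where I expect the bulk of the technical effort to concentrate.
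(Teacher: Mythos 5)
Your overall architecture (reduce to a local model by conservative descent, use a weighted Beilinson collection on $\mathcal{P}(\mathbf{d})$, get full faithfulness from the vanishing of $R\Gamma(\mathcal{P}(\mathbf{d}),\O(k))$ for $1-|\mathbf{d}|\le k\le -1$, and attribute the count $|\mathbf{d}|$ to a weighted Koszul resolution) agrees with the paper's. But your reduction step contains a genuine error. Smooth-local triviality of $\cal{A}$ lets you assume, after a smooth cover, that $X=\Spec R$ is affine and $\cal{A}=\sum_{i=0}^n(f_i,d_i)$ for a Koszul-regular sequence $f_0,\dots,f_n$ in $R$; it does \emph{not} make the diagram the universal weighted blowup of $\A^{n+1}_{\Z}$ at the origin. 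The classifying map $\Spec R\to\A^{n+1}_{\Z}$ determined by $f_\bullet$ is neither smooth nor flat, so the Bergh--Schn\"urer conservative descent theorem (which needs a faithfully flat base change) cannot pull the decomposition back from that universal model; you would need some Tor-independent base-change theorem for semi-orthogonal decompositions, which you do not supply. In the correct local model $Z_1=V(f_0,\dots,f_n)$ is an arbitrary Koszul-regularly immersed closed subscheme, not a point, and $\tilde{X}$ is not a weighted projective bundle over anything, so your claim that $R\pi_*\O_{\tilde{X}}=\O_X$ ``follows from the nonnegative cohomology of $\mathcal{P}(\mathbf{d})$'' has no content there. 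The paper instead passes to the extended Rees algebra $R[t^{-1},x_0,\dots,x_n]/(f_i-x_it^{-d_i})$, proves the nontrivial fact that $f_0-x_0t^{-d_0},\dots,f_n-x_nt^{-d_n}$ is again Koszul-regular (Lemma \ref{lemma-key}), and computes $R\pi_*\O_{\tilde{X}}$ via a hypercohomology spectral sequence over the punctured affine space (Proposition \ref{prop-sga6}); your proposal has no substitute for this step.

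Your generation argument is also too vague and, as stated, fails for $\Dqc$: the map $p$ exists only on $E$, not on its infinitesimal neighbourhoods, and vanishing of the restrictions of an unbounded quasi-coherent complex to all infinitesimal neighbourhoods of $E$ does not force it to vanish near $E$. The paper sidesteps this entirely: from the presentation $\tilde{X}=[\Spec\cal{A}^{\mathrm{ext}}\setminus V(x_0,\dots,x_n)/\Gm]$ and the weighted Koszul resolution it shows that $\O_{\tilde{X}}(1-|\mathbf{d}|),\dots,\O_{\tilde{X}}$ already generate $\Dqc(\tilde{X})$ (Lemma \ref{lemma-koszul-beilinson}), and then places each $\O_{\tilde{X}}(r)$ in the triangulated hull of the summands by descending induction along the triangles $\O_{\tilde{X}}(r+1)\to\O_{\tilde{X}}(r)\to j_*\O_E(r)$. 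Finally, for $\D=\Perf$ properness of $\pi$ and $j$ is not enough: you must show the pushforwards preserve perfect complexes, which the paper does via compact generation together with the explicit computation of $R\Gamma(\tilde{X},\O_{\tilde{X}}(r))$ for $1-|\mathbf{d}|\le r\le 0$.
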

    
	In the two extreme cases of a weighted blowup, namely root stacks and usual blowups, Theorem \ref{thm-main} has been established, most recently by Bergh and Schn\"urer (\cite[Theorem C]{ConservDesc}). It is their approach that we will follow. However, their argument uses in two crucial ways (namely in establishing that $L\pi^*$ is fully faithful and that $\rm{D}(X)$ is generated by \enquote{just} the right number of elements) that the blowup of an algebraic stack $X$ along a Koszul-regular closed immersion $Z\hookrightarrow X$ of constant codimension (locally) embeds into $\P^{\codim(Z/X)-1}_X$. Unfortunately, it is not so clear that the analogue of this fact is true in the weighted case, so their argument is not directly applicable.
	
	In this paper, we get around the above-mentioned problem by considering a weighted blowup as the $\Pproj$ of an \textit{extended} Rees algebra, in the sense of \cite{QuekRydh}. The extended Rees algebra has a particularly attractive description (see Example \ref{eg-h1-regular-extended-rees}) and allows us to work with explicit Koszul complexes. 
    \begin{example}
        By {\cite[Example 5.4.4]{QuekRydh}}, every regular weighted blowup $\tilde{X} \rightarrow X$ factors as a sequence of root stacks, a usual (regular) blowup followed by a sequence of \enquote{de-rootings}. In other words, there is a morphism $\hat{X} \rightarrow \tilde{X}$ which is a sequence of root stacks such that the composition $\hat{X} \rightarrow X$ admits a factorisation as a (potentially different) sequence of root stacks followed by a usual (regular) blowup. Note in particular that the de-rootings are necessary, see for example the explicit example presented in \cite[Example 5.4.4]{QuekRydh}. Since the derived category of root stacks and usual blowups are classical we have a description of $\Dqc(\hat{X})$. Theorem \ref{thm-main} then yields an alternative SOD of $\Dqc(\hat{X})$. See Example \ref{eg: fkn-notation} for an explicit case worked out.
    \end{example}
    A pleasant byproduct of the work done to obtain Theorem \ref{thm-main} is that we will compute the relative dualising sheaf of a (Koszul-regular) weighted blowup. This is our second result:
    \begin{theorem}\label{thm: shriek}
        Fix $\mathbf{d} =(d_0,...,d_n)$ a tuple of positive integers. Let $X$ be a concentrated algebraic stack with quasi-finite and separated diagonal. Let $\cal{I}_\bullet$ be a Koszul-regular weighted centre locally of the form $\sum (f_i, d_i)$ and let $\pi:\tilde{X}\rightarrow X$ be the weighted blowup with exceptional divisor $E$. Then $Rf_*$ admits a right adjoint $\pi^!$ given by \[\pi^!\F = L\pi^*\F \ltimes_{\tilde{X}} \O_{\tilde{X}}((\sum d_i - 1)E). \] 
    \end{theorem}

    This, of course, generalises the well-known result for classical blowups.

	\subsection{Related Results}
	When $X$ is a smooth projective complex variety and $\tilde{X}$ is the (usual) blowup of a smooth subvariety, the result for $\mathrm{D} = \DbCoh = \Perf$ goes back to Orlov \cite[Theorem 4.1]{OrlovThesis}. This was generalised by Elagin to the case where $X$ is the quotient of a smooth quasi-projective variety by a linearly reductive group over an arbitrary field in \cite[Theorem 10.2]{Elagin_2012}. And as mentioned, when $X$ is an algebraic stack and $\tilde{X}$ is the blowup of a Koszul-regular closed substack (in the sense of \cite[\href{https://stacks.math.columbia.edu/tag/063J}{Tag 063J}]{stax}), the result was established by Bergh and Schn\"urer in \cite[Theorem C]{ConservDesc}.
	
	The case of root stacks goes back to work of Ishii and Ueda in \cite[Theorem 1.6]{ishii_ueda}, where they established the decomposition for $\rm{D} =\DbCoh = \Perf$ in the case $X$ is a smooth Deligne-Mumford stack over a field and $\tilde{X}$ is the root stack associated to a smooth divisor. This was refined to the case where $X$ was an arbitrary root stack by Bergh, Lunts and Schn\"urer as \cite[Theorem 4.7]{geometricity}, and reiterated in the aforementioned work of Bergh and Schn\"urer \cite[Theorem C]{ConservDesc}.
	
	When $X$ is a noetherian separated scheme over an algebraically closed characteristic zero field and $\X$ is a root stack, a periodicity phenomenon in the decomposition of $\DbCoh$ was observed by Bodzenta and Donovan in \cite[Theorem 1.2 and Theorem 1.4]{donovan_root_stax}. The authors use this to construct higher spherical functors. A refinement of this result where $X$ is arbitrary was proven by Zhao in \cite[Theorem 1.1]{zhaoyu}.
	\subsection{Acknowledgements}
	I am gratefully indebted to my advisor Jack Hall for suggesting I look into weighted blowups and for his constant guidance, in this project and beyond. I would also like to thank Dougal Davis for numerous helpful suggestions, in particular the proof of Lemma \ref{lemma-key}. Next I would like to thank Will Donovan, for introducing me to the literature on derived categories of root stacks (in particular the work of Bergh-Schn\"urer \cite{ConservDesc}), and for answering my questions about his work \cite{donovan_root_stax}. I have also benefited greatly from comments, suggestions and discussions with Arkamouli Debnath, Sveta Makarova, Fei Peng and David Rydh. 
	
	I am supported by the Australian Commonwealth Government.
	\section{Preliminaries}
	In this section, we collect necessary definitions and results from the literature.
	\subsection{Stacky $\Proj$ and Weighted Blowups}
	
	Let $S$ be an algebraic stack and let $\cal{A} = \bigoplus_{d\in \Z} \cal{A}_d$ be a $\Z$-graded quasi-coherent sheaf of $\O_S$-algebras (note that we are allowing nonzero components in negative degrees). The grading is equivalent to a $\Gm$-action on $\Spec_{\O_S} \cal{A}$ over $S$, by declaring $\Gm$ acts with weight $d$ on $\cal{A}_d$. Write 
	\begin{align*}
		\cal{A}_+ := \bigoplus_{d > 0} \cal{A}_{d}, 
	\end{align*}  
	and 
	\begin{equation*}
		\cal{A}_- := \bigoplus_{d < 0} \cal{A}_{d}.
	\end{equation*}
	\begin{definition}
		We define $\Pproj_{\O_S}(\cal{A})$ to be the algebraic stack \[\Pproj_{\O_S}(\cal{A}) := [(\Spec_{\O_S} \cal{A} \setminus V(\cal{A}_+))/ \Gm]. \] 
	\end{definition}
	Given that we are not requiring $\cal{A}_{-} = 0$, the choice to remove only the vanishing of $\cal{A}_+$ may seem a bit arbitrary. The reason we have opted for this is that we would like to describe blowups as the $\Pproj$ of an extended Rees algebra, which has negative components.
    
	Just as with usual $\Proj$, one can define the line bundles $\O(r)$ corresponding to the shifted modules $\cal{A}(r)$.
	
	We now recall the definition of a weighted blowup. Let $X$ be a fixed algebraic stack. 
	\begin{definition}[{\cite[Definition 3.1.1]{QuekRydh}}]
		A \textit{Rees algebra} on $X$ is a quasi-coherent graded finitely-generated subalgebra $\bigoplus_{n \geq 0} \I_nt^n$ of $\Ox[t]$. 
	\end{definition}
	Note that the collection $\I_\bullet = \{\I_{n}\}_{n\geq 0}$ defines a family of closed substacks $Z_\bullet = \{Z_n\}_{n\geq 0}$. In \cite{QuekRydh}, this data is called a \textit{weighted closed immersion}, we will use the term \textit{weighted centre}: by a \textit{weighted centre} we will understand to mean the weighted centre of some Rees algebra. Thus weighted centres correspond precisely to Rees algebras.

	We will next describe the extension of a Rees algebra.
	\begin{definition}
		Let $\cal{I}_\bullet$ be a weighted centre. The \textit{associated extended Rees algebra} is the graded $\Ox[t^{-1}]$-algebra that is the usual Rees algebra $\bigoplus_{n \geq 0} \I_nt^n$ in non-negative degrees, but is $\Ox$ in all negative degrees. It can be explicitly described as \[\cal{A} := \bigoplus_{n \geq 0} \I_nt^n \oplus \bigoplus_{n < 0}\Ox t^n. \]
	\end{definition}
    
	\begin{definition}
		Let $\I_\bullet$ be a weighted centre with associated extended Rees algebra $\cal{A}$. The \textit{weighted blowup} of $X$ along $\I_\bullet$ is defined to be $\Bl_{\I_\bullet}X := \Pproj_{\Ox}(\cal{A})$. 
	\end{definition}
    \begin{example}
        Suppose $\I_n = \I_1^n$. Then $\Bl_{\I_\bullet}$ is just the usual blowup of $X$ along $V(\I_1)$, by \cite[Remark 3.2.4]{QuekRydh}.
    \end{example}
	Describing the weighted blowup in terms of the extended Rees algebra is advantageous for several reasons: the extended Rees algebra is often easier to describe in practice, and furthermore the exceptional divisor and deformation to the (weighted) normal cone become very easy to define:
	\begin{definition}
		Let $\I_\bullet$ be a weighted centre and let $\tilde{X}$ be its blowup. The \textit{exceptional divisor} is the vanishing locus $E :=V(t^{-1})\subseteq \tilde{X}$. This is an effective Cartier divisor with associated line bundle $\O_{\tilde{X}}(-1)$.
		
		The \textit{weighted normal cone} is defined to be $\Spec_{\Ox}(\bigoplus_{n \geq 0} \I_n/\I_{n+1})$. Note that the exceptional divisor is isomorphic to the $\Pproj$ of the weighted normal cone. Finally, we define the \textit{deformation to the weighted normal cone} to be $\Spec_{\Ox}\cal{A}$.
	\end{definition}
	\begin{remark}
		Unlike with usual blowups, it is not in general true that the following commutative diagram is cartesian:\[\begin{tikzcd}
			E \arrow[r]\arrow[d] & \tilde{X}\arrow[d] \\
			Z_1\arrow[r] & X,
		\end{tikzcd} \] where $Z_1 = V(\cal{I}_1)$.
	\end{remark}
	Next we discuss the various notions of regularity for a Rees algebra. Let us fix some notation: given $f_1,...,f_n\in \Ox$ and $d_1,...,d_n\in \Z_{\geq 1}$, we write $\sum_{i = 1}^n (f_i, d_i)$ for the smallest weighted centre (ordered by degree-wise inclusion) which contains $f_i$ in degree $d_i$.
	\begin{definition}\label{definition-regularity}
		Let $\I_\bullet$ be a weighted centre. We say it is \textit{strictly (quasi, $H_1$, Koszul etc.)-regular} if $\I_\bullet$ is of the form $\sum_{i = 1}^n (f_i, d_i)$ and $f_1,...,f_n$ is a (quasi, $H_1$, Koszul etc.)-regular sequence in $H^0(X, \Ox)$ (see \cite[\href{https://stacks.math.columbia.edu/tag/062D}{Tag 062D}]{stax} and \cite[\href{https://stacks.math.columbia.edu/tag/061M}{Tag 061M}]{stax}). We say it is \textit{(quasi, $H_1$, Koszul etc.)-regular} if smooth-locally at every point on $X$, $\cal{A}$ is strictly (quasi, $H_1$, Koszul etc.)-regular.
	\end{definition}
    It may not immediately clear why requiring the $f_i$ to be an [insert adjective]-regular sequence is of utility, but the key is the following result:
    \begin{lemma}{\cite[Proposition 5.1.2]{QuekRydh}}
        Let $R$ be a ring and let $A =\bigoplus I_nt^n = \sum (f_i, d_i)$ be a Rees algebra. Then $A$ is quasi-regular if and only if each $I_n/I_{n+1}$ is a free $R/I_1$-module.
    \end{lemma}
    In particular, the well-known fact that the exceptional divisor of a (usual) blowup is a projective space bundle over the centre if and only if the centre is quasi-regular generalises. To state the generalisation precisely, we need the following terminology:
    \begin{definition}[{\cite[Definition 2.1.3]{QuekRydh}}]
		Let $X$ be an algebraic stack. A \textit{twisted weighted vector bundle} on $X$ is an algebraic stack $E$ smooth and affine over $X$ with an action of $\Gm$ (also over $X$) such that there exists a smooth cover $X' \rightarrow X$, an $n \geq 0$ and a Cartesian diagram \[\begin{tikzcd}
			\A^{n+1} \times X'\arrow[r, "f"]\arrow[d] & E\arrow[d] \\
			X' \arrow[r]& X,
		\end{tikzcd} \] where $\Gm$ acts linearly on $\A^{n+1}\times X'$ with certain weights $0 <d_0\leq...\leq d_n \in \Z$ and $f$ is $\Gm$-equivariant.
	\end{definition}
	Just as vector bundles correspond to locally free sheaves of finite rank, twisted weighted vector bundles correspond to graded quasi-coherent $\Ox$-algebras that smooth-locally are isomorphic (as graded algebras) to a polynomial algebra, where the variables have positive weights. The correspondence simply takes such an algebra $\cal{A}$ to $\Spec_{\Ox}(\cal{A})$. By abuse of language, we shall also call such algebras twisted weighted vector bundles.
	\begin{definition}\label{def-weighted-stack-bundle}
		A \textit{twisted weighted projective stack bundle} is the $\Pproj$ of a twisted weighted vector bundle.
	\end{definition}
	\begin{proposition}[{\cite[Proposition 5.4.1]{QuekRydh}}]\label{prop-twvb-iff-quasiregular}
		Let $X$ be a quasi-compact algebraic stack and let $\cal{A} = \bigoplus_{n \geq 0} \I_{n}t^n$ be a Rees algebra. Write $Z_n$ for the closed substack defined by $\I_n$. Then $\cal{A}$ is quasi-regular if and only if the weighted normal cone of the blowup of $X$ along $\cal{A}$ is a twisted weighted vector bundle over $Z_1$. In particular, the exceptional divisor will also be a twisted weighted projective stack bundle over $Z_1$ in this case. Moreover, if $\cal{A}$ is strictly quasi-regular then the twisted weighted projective stack bundle over $Z_1$ is a weighted projective stack (i.e. the $\Pproj$ of a graded polynomial ring where the variables have positive weight).
	\end{proposition}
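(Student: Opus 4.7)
The plan is to reduce to the strict case via smooth descent and then verify by direct computation that the coordinate algebra of the weighted normal cone is a weighted polynomial ring over $\O_{Z_1}$ if and only if the generators of $\cal{A}$ form a quasi-regular sequence.

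Since both conditions in the statement are smooth-local on $X$, we may assume $\cal{A} = \sum_{i=1}^{k} (f_i, d_i)$ is strict. The $n$-th graded piece of $\cal{A}$ is $\I_n = \sum_{d\cdot \alpha \geq n} f^\alpha \Ox$ (where $d\cdot \alpha := \sum d_i \alpha_i$), so the associated graded $\bigoplus \I_n/\I_{n+1}$ carries an evident surjective graded $\O_{Z_1}$-algebra map
\[
\psi : \O_{Z_1}[x_1, \ldots, x_k] \longrightarrow \bigoplus_{n \geq 0} \I_n/\I_{n+1}, \qquad x_i \longmapsto [f_i] \in \I_{d_i}/\I_{d_i+1},
\]
where $x_i$ is placed in weight $d_i$. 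The proposition reduces to proving $\psi$ is an isomorphism if and only if $f_1, \ldots, f_k$ is a quasi-regular sequence in $H^0(X, \Ox)$.

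The converse direction is the cleaner one. If the weighted normal cone is smooth-locally $\A^{k}_{Z_1}$ with linear $\Gm$-weights $d_1, \ldots, d_k$, one lifts the weighted coordinates along $\I_{d_i} \twoheadrightarrow \I_{d_i}/\I_{d_i+1}$ to obtain $f_i \in \Ox$ generating $\cal{A}$, and then recovers the classical identification $(\Ox/I)[T_i] \xrightarrow{\sim} \bigoplus_m I^m/I^{m+1}$ (with $I = \I_1 = (f_i)$) by summing over the appropriate weighted-degree pieces, giving quasi-regularity. For the forward implication, one uses the classical quasi-regular identity to rule out nontrivial relations among the monomials $f^\alpha$ with $d\cdot\alpha = n$ modulo $\I_{n+1}$: given a hypothetical relation $\sum_{d\cdot\alpha = n} c_\alpha f^\alpha = \sum_{d\cdot \beta \geq n+1} c'_\beta f^\beta$ in $\Ox$, a weight-by-weight comparison against the $I$-adic structure forces each $c_\alpha$ into $I$. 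The ``moreover'' clause is then immediate: in the strict case the $f_i$ exist globally, so the weighted normal cone is globally $\A^{k}_{Z_1}$ with linear $\Gm$-action of weights $d_i$, and its $\Pproj$ is a weighted projective stack in the sense of Definition \ref{def-weighted-stack-bundle}.

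The main obstacle is this weight-by-weight extraction in the forward direction: the weighted filtration $(\I_n)$ and the $I$-adic filtration $(I^n)$ are interleaved in a subtle manner (one has $I^n \subseteq \I_n$, but generally strictly so), so matching up the two sides of the quasi-regular identity requires careful combinatorial bookkeeping over the multi-indices $\alpha$. This is precisely the computation carried out in the proof of \cite[Proposition 5.4.1]{QuekRydh}.
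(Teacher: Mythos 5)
First, a structural point: the paper does not prove this proposition at all --- it is quoted verbatim from \cite[Proposition 5.4.1]{QuekRydh} and used as a black box, so there is no in-paper argument to compare yours against. Judged on its own terms, your sketch has the right skeleton: both conditions are smooth-local, so one reduces to the strict case and compares the weighted associated graded $\bigoplus_n \I_n/\I_{n+1}$ with the weighted polynomial ring $\O_{Z_1}[x_1,\dots,x_k]$ via the surjection $\psi$. But as written it is not a proof, for two reasons.

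The one concrete mechanism you do propose --- recovering the classical identification $(\Ox/I)[T_1,\dots,T_k]\cong\bigoplus_m I^m/I^{m+1}$ from the weighted one ``by summing over the appropriate weighted-degree pieces'' --- does not work as stated. The two filtrations are not refinements of one another: with $d=(2,1)$ and $I=(f_1,f_2)$ one has $\I_2=(f_1,f_2^2)$ and $\I_3=(f_1^2,f_1f_2,f_2^3)$, while $I^2=(f_1^2,f_1f_2,f_2^2)$ is neither; a monomial $f^\alpha$ carries two unrelated degrees $|\alpha|$ and $d\cdot\alpha$, so the graded pieces of one associated graded are not unions of graded pieces of the other. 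Translating between the two associated graded rings is exactly the content of the proposition and cannot be dispatched as a regrouping. For the forward direction you correctly identify the needed ``weight-by-weight comparison'' (it can be made to work by a secondary induction on $|\alpha|$ within each weighted degree $n$, using $I\cdot\I_n\subseteq\I_{n+1}$ and the standard quasi-regular identity in each total degree), but you then explicitly defer this step to the proof of \cite[Proposition 5.4.1]{QuekRydh} --- the very statement being proved --- which makes the proposal a restatement plus a citation rather than an argument. The converse direction additionally needs a graded Nakayama-type step to see that the lifted coordinates actually generate $\cal{A}$. Since the paper itself treats this result as an external input, citing Quek--Rydh is legitimate; but then the sketch of the two implications should either be completed or dropped, rather than left in this intermediate state.
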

    Let us conclude with some examples:
	\begin{example}\label{eg-h1-regular-extended-rees}
		Suppose $\I_\bullet =(f_1, d_1) + ... + (f_n, d_n)$ is a strictly $H_1$-regular weighted centre with associated extended Rees algebra $\cal{A}$. Then by \cite[Proposition 5.2.2]{QuekRydh}, we have an isomorphism \[\cal{A} \cong \Ox[x_1,...,x_n, t^{-1}]/(f_1-x_1t^{-d_1},...,f_n - x_nt^{-d_n}). \] 
	\end{example}
	\begin{example}\label{eg-extended-rees-affine}
		As a special case of the above example, note that if $X = \A^n_S = \Spec \O_S[x_1,...,x_n]$ over some base algebraic stack $S$ and $\I_\bullet = \sum (x_i, d_i)$, then (forgetting the grading), the extended Rees algebra is the polynomial algebra one more variable: $\cal{A} = \O_S[u_1,...,u_n, t^{-1}]$. In particular, the deformation to the weighted normal cone is also an affine space.
	\end{example}

	\subsection{Triangulated Categories and Conservative Descent}
	We begin by recalling the notions of compactness and generation:
	\begin{definition}[{\cite[Definition 1.7]{neeman_duality}}]
		Let $\sf{T}$ be a triangulated category. An object $\F$ is \textit{compact} if $\Hom(\F,-)$ commutes with coproducts.
	\end{definition}
	\begin{example}
		Let $X$ be a quasi-compact and quasi-separated algebraic stack. If $X$ is concentrated then the compact objects in $\Dqc(X)$ are exactly the perfect complexes (\cite[Lemma 4.4]{perfect_stax}). 
	\end{example}
	\begin{definition}[{\cite[\href{https://stacks.math.columbia.edu/tag/09SI}{Tag 09SI}]{stax}}]\label{def-generators}
		Let $\sf{T}$ be a triangulated category and let $\F$ be an object.  We say $\F$ is a \textit{generator}, if $\Hom(\F, \E[r]) = 0$ for all $r\in \Z$ implies $\E = 0$. More generally, a set $\bf{S}$ of objects in $\sf{T}$ is a \textit{generating set} if $\Hom(\F,  \E[r]) = 0$ for all $\F\in \bf{S}$ and all $r\in \Z$ implies $\E = 0$. We say $\sf{T}$ is \textit{compactly generated} if there is a generating set consisting of compact objects.
	\end{definition}
	\begin{example}\label{eg-gm-characters-generate}
		Let $S$ be an algebraic stack such that $\Dqc(S)$ is compactly generated, and fix $\{\E_\alpha\}$ a set of compact generators. Let $\pi: B\mathbb{G}_{m, S} \rightarrow S$ be the projection map and for each $r\in \Z$ let $\chi_r$ denote the corresponding $\Gm$ character. Then $\{\pi^*\E_\alpha \otimes \chi_r\}$ is a compact generating set for $\Dqc(B\mathbb{G}_{m, S})$. This is easy to see, after noting that $\bigoplus \pi_*(\F \otimes \chi_r) = 0$ implies $\F=0$, for any $\F \in \Dqc(B\bb{G}_{m,S})$.
	\end{example}
   
	\begin{definition}
		Let $\sf{T}$ be a triangulated category. Given an object $\F \in \sf{T}$, we write $\langle \F \rangle$ for the smallest strictly full, saturated (i.e. closed under taking summands) triangulated subcategory of $\sf{T}$ containing $\F$. 
	\end{definition}
	The following simple lemma will turn out useful:
	\begin{lemma}[{\cite[\href{https://stacks.math.columbia.edu/tag/09SK}{Tag 09SK}]{stax}}]\label{lemma-obvious}
		Given any two objects $\E, \F$, the following are equivalent:
		\begin{enumerate}[label = \normalfont(\roman*)]
			\item $\Hom(\F, \E[r]) = 0$ for all $r\in \Z$.
			\item $\Hom(\F', \E) = 0$ for all $\F'\in \langle \F \rangle$.
		\end{enumerate}
	\end{lemma}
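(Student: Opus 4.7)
The two implications are of quite different character, so I would handle them separately.

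First I would dispense with (ii)$\Rightarrow$(i), which is essentially just unwinding the definitions. The subcategory $\langle \F \rangle$ is closed under shifts, so every $\F[r]$ lies in $\langle \F \rangle$. Applying (ii) to $\F' = \F[-r]$ and using the standard identification $\Hom(\F[-r],\E)=\Hom(\F,\E[r])$, we obtain (i). So this direction is immediate.

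For (i)$\Rightarrow$(ii), the natural strategy is to define the ``bad'' full subcategory of objects that behave well against $\E$ and show it contains $\langle \F \rangle$. Concretely, let
\[
\sf{C} := \{\,G \in \sf{T} \mid \Hom(G,\E[r]) = 0 \text{ for all } r \in \Z\,\}.
\]
Hypothesis (i) says precisely that $\F \in \sf{C}$. So the task reduces to verifying that $\sf{C}$ is a strictly full, saturated, triangulated subcategory of $\sf{T}$, since then minimality of $\langle \F \rangle$ forces $\langle \F \rangle \subseteq \sf{C}$, and reading off the $r=0$ instance gives (ii).

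The verification of the closure properties of $\sf{C}$ is routine and is where I would expect the only (very mild) obstacle: one must note that the defining condition, involving \emph{all} shifts of $\E$, is chosen precisely so that the cohomological long exact sequence obtained from $\Hom(-,\E)$ applied to a distinguished triangle $A\to B\to C\to A[1]$ gives, if two of $A,B,C$ lie in $\sf{C}$, that the third does as well (the condition on all $r$ is essential here; requiring merely $\Hom(G,\E)=0$ would not be closed under extensions). Closure under shifts is automatic from the definition, closure under isomorphisms is by ``strictly full'', and closure under direct summands follows since $\Hom(G_1\oplus G_2,\E[r])=\Hom(G_1,\E[r])\oplus\Hom(G_2,\E[r])$. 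Hence $\sf{C}$ is a strictly full saturated triangulated subcategory containing $\F$, completing the argument.
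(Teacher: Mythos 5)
Your proof is correct and is essentially the same argument as the one in the cited source (the paper itself offers no proof, deferring entirely to \cite[\href{https://stacks.math.columbia.edu/tag/09SK}{Tag 09SK}]{stax}, whose proof is precisely your ``define the full subcategory of objects orthogonal to all shifts of $\E$ and check it is strictly full, saturated, and triangulated'' argument). Your remark that the quantifier over all $r$ is what makes the subcategory closed under cones is exactly the right point to flag.
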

	Let us now recall the notion of a semi-orthogonal decomposition:
	\begin{definition}
		Let $\sf{T}$ be a triangulated category and $\sf{T}_1,...,\sf{T}_n$ a sequence of full triangulated subcategories. We say the sequence is \textit{semi-orthogonal} if each $\sf{T}_i$ is right-admissible (i.e. the inclusions admit a right adjoint) and for any $i < j$ and objects $\F_i\in \sf{T}_i$ and $\F_j\in \sf{T}_j$, we have $\Hom(\F_j, \F_i) = 0$. Suppose now $\sf{T}_1,...,\sf{T}_n$ is semi-orthogonal. We say the sequence is \textit{full}, or that it is a \textit{semi-orthogonal decomposition} if the smallest strictly full triangulated subcategory containing the $\sf{T}_i$ is $\sf{T}$. In this case, we write \[ \sf{T} = \langle \sf{T}_1,...,\sf{T}_n \rangle.\]
	\end{definition}
	The following result is our key to showing that a semi-orthogonal sequence is full:
	\begin{lemma}{\cite[Lemma 6.4]{ConservDesc}}\label{lemma-semiorth-full-generator}
		Let $\sf{T}$ be a triangulated category with a generator $\F$ and let $\sf{T}_1,...,\sf{T}_r$ be a semi-orthogonal sequence. Then the sequence is full if and only if their triangulated hull contains $\F$.
	\end{lemma}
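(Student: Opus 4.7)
The forward direction is immediate: if $\sf{T}=\langle \sf{T}_1,\ldots,\sf{T}_r\rangle$, then the triangulated hull is all of $\sf{T}$ and therefore contains $\F$. For the reverse direction, my plan is first to show that the triangulated hull $\sf{U}:=\langle \sf{T}_1,\ldots,\sf{T}_r\rangle$ is itself right-admissible in $\sf{T}$; the generator hypothesis then does all the remaining work.

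Step one: I would prove that $\sf{U}$ is right-admissible by induction on $r$. The base case $r=1$ is given. For the inductive step, rewrite $\sf{U}=\langle \sf{T}_1,\sf{U}'\rangle$ with $\sf{U}'=\langle \sf{T}_2,\ldots,\sf{T}_r\rangle$, which is right-admissible by the induction hypothesis and lies inside $\sf{T}_1^\perp$ by semi-orthogonality (since $\sf{T}_1^\perp$ is a triangulated subcategory containing each $\sf{T}_i$ for $i\geq 2$). It therefore suffices to handle $r=2$. Given $\E\in \sf{T}$, apply the right adjoint of $\sf{T}_1\hookrightarrow \sf{T}$ to obtain a triangle $\E_1\to \E\to \E^{(1)}$ with $\E_1\in \sf{T}_1$ and $\E^{(1)}\in \sf{T}_1^\perp$, and then the right adjoint of $\sf{T}_2\hookrightarrow \sf{T}$ to obtain $\E_2\to \E^{(1)}\to \E^{(2)}$ with $\E_2\in \sf{T}_2$ and $\E^{(2)}\in \sf{T}_2^\perp$. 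Since $\sf{T}_2\subseteq \sf{T}_1^\perp$ by semi-orthogonality and $\sf{T}_1^\perp$ is closed under cones, $\E^{(2)}\in \sf{T}_1^\perp\cap \sf{T}_2^\perp=\sf{U}^\perp$. Applying the octahedral axiom to the composition $\E\to \E^{(1)}\to \E^{(2)}$ produces a distinguished triangle $U\to \E\to \E^{(2)}$ along with another triangle $\E_1\to U\to \E_2$. The latter exhibits $U$ as an extension of $\E_2\in \sf{T}_2$ by $\E_1\in \sf{T}_1$, hence $U\in \sf{U}$, and the former is the required decomposition.

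Step two: conclude using $\F$. For arbitrary $\E\in \sf{T}$, step one yields a triangle $U\to \E\to V$ with $U\in \sf{U}$ and $V\in \sf{U}^\perp$. Since $\F\in \sf{U}$ by assumption, $\Hom(\F,V[r])=0$ for all $r\in \Z$, and since $\F$ generates $\sf{T}$ this forces $V=0$. Therefore $\E\cong U\in \sf{U}$, giving $\sf{U}=\sf{T}$ and proving that the semi-orthogonal sequence is full.

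The one delicate point is the octahedron argument in step one: verifying that the cone of $\E\to \E^{(2)}$ actually lands inside the triangulated hull $\sf{U}$ rather than merely in $\sf{T}$, and dually that $\E^{(2)}$ lands in $\sf{U}^\perp$. Both assertions are driven entirely by semi-orthogonality, which keeps the intermediate object $\E_2$ inside $\sf{T}_1^\perp$ and so allows the two successive right-adjoint approximations to be spliced together into a single one.
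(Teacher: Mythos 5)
Your overall strategy --- show that the triangulated hull $\sf{U}$ of the $\sf{T}_i$ is right-admissible by splicing together the approximation triangles of the individual pieces, and then use the generator to kill $\sf{U}^\perp$ --- is the standard one (the paper itself does not reprove this statement but cites \cite[Lemma 6.4]{ConservDesc}), and your Step two is fine. However, Step one contains a genuine error of direction. With the paper's convention that $\Hom(\F_j,\F_i)=0$ for $i<j$, semi-orthogonality places $\sf{T}_2$ in the \emph{left} orthogonal ${}^{\perp}\sf{T}_1=\{\E:\Hom(\E,\G)=0\ \forall\G\in\sf{T}_1\}$, whereas the cone $\E^{(1)}$ of the counit $\E_1\to\E$ of the right adjoint lies in the \emph{right} orthogonal $\sf{T}_1^{\perp}=\{\E:\Hom(\G,\E)=0\ \forall\G\in\sf{T}_1\}$. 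The inclusion $\sf{T}_2\subseteq\sf{T}_1^{\perp}$ that you invoke is therefore exactly the orthogonality you are \emph{not} given, and the claim $\E^{(2)}\in\sf{T}_1^{\perp}$ fails. Concretely, in $\DbCoh(\P^1)$ take $\sf{T}_1=\langle\O\rangle$, $\sf{T}_2=\langle\O(1)\rangle$ (a semi-orthogonal sequence in the paper's convention, since $R\Hom(\O(1),\O)=R\Gamma(\O(-1))=0$) and $\E=\O(1)$: your first triangle is $\O^{\oplus 2}\to\O(1)\to\O(-1)[1]$, and the second step then produces $\E^{(2)}\cong\O^{\oplus 2}[1]$, which lies in $\sf{T}_1$ itself rather than in $\sf{T}_1^{\perp}$.

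The repair is simply to perform the two approximations in the opposite order: first take the $\sf{T}_2$-approximation $\E_2\to\E\to\E'$ with $\E'\in\sf{T}_2^{\perp}$, then the $\sf{T}_1$-approximation $\E_1\to\E'\to\E''$ with $\E''\in\sf{T}_1^{\perp}$. The inclusion now needed is $\sf{T}_1\subseteq\sf{T}_2^{\perp}$, which \emph{is} what $\Hom(\sf{T}_2,\sf{T}_1)=0$ says; hence $\E''\in\sf{T}_1^{\perp}\cap\sf{T}_2^{\perp}=\sf{U}^{\perp}$, and your octahedron argument then goes through verbatim to give $U\to\E\to\E''$ with $U$ an extension of $\E_1$ by $\E_2$, so $U\in\sf{U}$. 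Correspondingly, the induction should peel off the \emph{last} factor, writing $\sf{U}=\langle\sf{U}'',\sf{T}_r\rangle$ with $\sf{U}''=\langle\sf{T}_1,\dots,\sf{T}_{r-1}\rangle$ (or keep your bracketing $\langle\sf{T}_1,\sf{U}'\rangle$ but approximate by $\sf{U}'$ first and by $\sf{T}_1$ second). With that reordering the proof is correct.
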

	We now recall the theory of conservative descent, as developed in \cite{ConservDesc}. This will allow us to reduce to the affine case. Recall that all functors are derived.
	\begin{definition}[{\cite[Definition 3.3]{ConservDesc}}]\label{def-fourier-mukai}
		Let $S$ be a base algebraic stack and let $X$ and $Y$ be algebraic stacks over $S$. A \textit{Fourier-Mukai transform from $X$ to $Y$ over $S$} is a tuple $(K, \cal{P}, p,q)$, where we have the commutative diagram of algebraic stacks: \[\begin{tikzcd}
			&K \arrow[ld, "p"] \arrow[dr, "q"]\\
			X\arrow[rd] && \arrow[dl]Y\\
			&S,
		\end{tikzcd} \] and $\cal{P}$ is a perfect complex on $K$, such that the following hold:
		\begin{enumerate}[label = \normalfont(\roman*)]
			\item\label{condition-1} The maps $p,q$ are perfect (see \cite[\href{https://stacks.math.columbia.edu/tag/0685}{Tag 0685}]{stax} - which generalises easily to stacks), proper and concentrated (see \cite[\S2]{perfect_stax}).
			\item\label{condition-2} The object $q^! \O_Y$ is perfect (where $q^!: \Dqc(Y)\rightarrow \Dqc(K)$ is the right adjoint to $Rq_*$ - which exists by \cite[Theorem 4.14]{perfect_stax}).
			\item\label{condition-4} Given any flat $u: Y' \rightarrow Y$ with Cartesian square \[\begin{tikzcd}
				K' \arrow[r]\arrow[d, "q'"] \arrow[r, "v"]& K \arrow[d, "q"]\\
				Y' \arrow[r, "u"] & Y,
			\end{tikzcd} \] the natural transformations $$v^*q^!\rightarrow (q')^!u^*$$ and $$(q')^!\O_Y \otimes L(q')^*(-)\rightarrow (q')^!(-)$$ are isomorphisms.
		\end{enumerate}
	\end{definition}
	\begin{remark}
	    We should really be using $q^\times$ instead of $q^!$. However, in all situations dealt with in the paper, the two functors coincide.
	\end{remark}
	Given a Fourier-Mukai transform, we have an associated functor $\F \mapsto q_*(p^*\F \otimes \cal{P})$ which admits a right adjoint $\G \mapsto p_*(q^!\G\otimes \cal{P}^\vee)$. Moreover, 
	given a flat morphism $S' \rightarrow S$, there is an obvious notion of flat base-change.
	\begin{theorem}[{\cite[Theorem 6.1]{ConservDesc}}]\label{thm-cons-desc}
		Let $S$ be a base algebraic stack. Let $X_1,...,X_n$ and $Y$ be algebraic stacks over $S$ and let $\Phi_i: \Dqc(X_i)\rightarrow \Dqc(Y)$ be (the associated functors of) Fourier-Mukai transforms over $S$. Let $S' \rightarrow S$ be a faithfully flat map and denote the base-changes by $X_i', Y', \Phi_i'$ etc. 
		\begin{enumerate}[label = \normalfont(\roman*)]
			\item If $\Phi_i'$ is fully faithful, then so is $\Phi_i$.
			\item Assuming all the $\Phi_i'$ are fully faithful, if $\im \Phi_1',...,\im \Phi_n'$ is a semi-orthogonal sequence in $\Dqc(Y')$, then so is $\im \Phi_1,...,\im \Phi_n$ in $\Dqc(Y)$. 
			\item If $\im \Phi_1',...,\im \Phi_n'$ is a semi-orthogonal decomposition of $\Dqc(Y')$, then $\im \Phi_1,...,\im \Phi_n$ is a semi-orthogonal decomposition of $\Dqc(Y)$. 
		\end{enumerate}	
	\end{theorem}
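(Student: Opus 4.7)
The plan is to reduce all three parts to two ingredients: (a) for a faithfully flat $u: S' \to S$, the induced pullback $u^*: \Dqc(Y) \to \Dqc(Y')$ is conservative, and (b) each $\Phi_i$ \emph{together with} its right adjoint $\Psi_i = p_{i*}(q_i^!(-) \otimes \cal{P}_i^\vee)$ commutes with flat pullback. Ingredient (b) is where the axioms of Definition \ref{def-fourier-mukai} earn their keep: standard flat base change for $p_{i*}$ handles $\Phi_i$ (using that $p_i$ is perfect and concentrated per condition \ref{condition-1}, together with compatibility of $\otimes$ with pullback), while condition \ref{condition-4} is tailor-made to ensure that $q_i^!$ commutes with flat pullback, and the projection formula encoded there promotes this to base change for $\Psi_i$.

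For (i), fully faithfulness of $\Phi_i$ is equivalent to its unit $\eta: \mathrm{id} \to \Psi_i \Phi_i$ being an isomorphism. Applying $u^*$ and invoking (b), one identifies $u^*\eta$ with the unit of $\Phi_i'$, which is an isomorphism by hypothesis; conservativity of the pullback on $\Dqc(X_i)$ then forces $\eta$ itself to be an isomorphism. For (ii), semi-orthogonality of the images $\im \Phi_1, \ldots, \im \Phi_n$ is equivalent to the vanishing $\Psi_j \circ \Phi_i = 0$ for $i < j$; this can again be checked object-wise after applying $u^*$ and using (b).

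For (iii), right-admissibility of the inclusions $\im \Phi_i \hookrightarrow \Dqc(Y)$ is automatic from the existence of $\Psi_i$, so the only remaining content is fullness. I would argue by showing the right orthogonal of $\{\im \Phi_i\}_i$ inside $\Dqc(Y)$ is trivial: if $\G$ is in this orthogonal, then $\Psi_i \G = 0$ for every $i$, hence by (b) $\Psi_i'(u^*\G) = 0$, so $u^*\G$ lies in the right orthogonal of every $\im \Phi_i'$ in $\Dqc(Y')$; since $\im \Phi_1', \ldots, \im \Phi_n'$ is assumed to be a full semi-orthogonal decomposition there, this forces $u^*\G = 0$, and conservativity from (a) concludes $\G = 0$.

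The main obstacle I anticipate is making the base change compatibility for $q^!$ rigorous in the stacky, potentially non-representable setting -- this is precisely why condition \ref{condition-4} is postulated rather than derived, and any careful write-up has to verify it is being applied to morphisms genuinely covered by the hypotheses at each stage. A secondary technical point is establishing conservativity of faithfully flat pullback on $\Dqc$ of an algebraic stack, which ultimately reduces via a smooth atlas to the scheme case but requires some care to set up cleanly; this is standard but not trivial, and deserves a separate lemma.
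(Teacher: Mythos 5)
The paper gives no proof of this statement: it is imported verbatim from Bergh--Schn\"urer as \cite[Theorem 6.1]{ConservDesc}, so the only comparison available is with the cited source, whose proof your outline essentially reproduces --- adjoint pairs $(\Phi_i,\Psi_i)$ commuting with faithfully flat base change (conditions \ref{condition-1}--\ref{condition-4} of Definition \ref{def-fourier-mukai} exist precisely to make $\Psi_i = p_{i*}(q_i^!(-)\otimes\cal{P}_i^\vee)$ base-change correctly), combined with conservativity of faithfully flat pullback on $\Dqc$. Your reductions in (i)--(iii) are sound, including the filtration argument showing that a semi-orthogonal sequence of right-admissible subcategories is full if and only if its common right orthogonal vanishes. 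One cosmetic slip: since $\Phi_i = q_{i*}(p_i^*(-)\otimes\cal{P}_i)$, it is flat base change for $q_{i*}$ (not $p_{i*}$) that handles $\Phi_i$; it is $p_{i*}$ together with condition \ref{condition-4} that handles the right adjoint.
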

	\begin{remark}
		Note that our definition differs from {\cite[Definition 3.3]{ConservDesc}} slightly, in that {\cite[Definition 3.3]{ConservDesc}} further requires $Rp_*$ and $Rq_*$ preserve perfect and pseudo-coherent complexes. However, this requirement is used only to show that the induced functor preserves perfect and (locally bounded) pseudo-coherent complexes respectively. The advantage of this requirement is that Theorem \ref{thm-cons-desc} works with the categories of perfect and (locally bounded) pseudo-coherent complexes in place of $\Dqc$ - however its absence does not affect the corresponding statement for $\Dqc$ itself. 
		
		Since it is not so clear at the time of writing that our relevant Fourier-Mukai transforms preserve pseudo-coherent complexes in the non-noetherian setting (the main issue being the absence of a theory of \enquote{approximation by perfect complexes}), we have opted to remove this requirement, and instead we will manually deduce the corresponding statements for $\Perf$ etc.
	\end{remark}
	\section{Some cohomology computations}
	The goal of this section is to first prove the below result. 
	\begin{proposition}\label{prop-sga6}
		Let $X$ be an algebraic stack and let $\pi:\tilde{X} \rightarrow X$ be the weighted blowup of a Koszul-regular weighted centre. Then the pullback map $L\pi^*: \Dqc(X) \rightarrow \Dqc(\tilde{X})$ is fully faithful.
	\end{proposition}

	When $X$ is a smooth variety and $\tilde{X}$ is the (usual) blowup along a smooth subvariety, the typical argument (e.g. in \cite[III Proposition 3.4]{Hart}) proceeds by using the theorem on formal functions to show $R^i\pi_*(\O_{\tilde{X}}) = 0$ for all $i > 0$ and appealing to the normality of $X$ to establish the isomorphism of $\Ox \rightarrow R^0\pi_*\O_{\X}$. When $X$ is an arbitrary scheme and $\X$ is the blowup of a Koszul-regular closed subscheme (in the sense of  \cite[\href{https://stacks.math.columbia.edu/tag/063J}{Tag 063J}]{stax}), this result is established in \cite[Lemme 3.5]{sga6} by a descending induction argument. We will give a different, arguably more intuitive proof.

	We begin by recalling the following standard result:
	\begin{lemma}\label{lemma-cohomology-punctured-affine}
		Let $S$ be an algebraic stack. Let $j: \A^{n+1}_S\setminus \{0\}\hookrightarrow \A^{n+1}_S$ denote the inclusion and $i: Z =0:= S \hookrightarrow \A^{n+1}_S$ denote the complement. Denote also the coordinates of $\A^{n+1}$ by $x_0,...,x_n$. Then \[R\Gamma_Z(\O_{\A^{n+1}}) = \frac{1}{x_0...x_n}\O_S[{x_0}^{-1},...,x_n^{-1}] [-n-1]. \] Moreover, the map $R\Gamma_Z(\O_{\A^{n+1}})\rightarrow R\Gamma(\O_{\A^{n+1}})$, considered as a map in $\Dqc(S)$, is zero, and consequently \[Rj_*(\O_{ \A^{n+1}_S\setminus \{0\}}) = \O_S[x_0,...,x_n] \oplus \frac{1}{x_0...x_n}\O_S[{x_0}^{-1},...,x_n^{-1}] [-n].\]
	\end{lemma}
	\begin{proof}
		For $S = \Spec \Z$ it is well-known. For the general case, base-change along $S \rightarrow \Spec \Z$.
	\end{proof}
	\begin{lemma}\label{lemma-cohomology-line-bundles}
		Let $S$ be an algebraic stack and let \[f:\scr{P}(d_0,...,d_n) = \Pproj \O_S[x_0,...,x_n]\rightarrow S\] be the weighted projective stack over $S$ with weights $1 \leq d_0 \leq...\leq d_n$. Then: \[Rf_*(\O(r)) = \O_S[x_0,...,x_n]_{ r} \oplus \left(\frac{1}{x_0...x_n}\O_S[{x_0}^{-1},...,x_n^{-1}]_{r}\right)[-n]. \] 
	\end{lemma}
	
	\begin{proof}
		This follows immediately from Lemma \ref{lemma-cohomology-punctured-affine} and the fact that $B\Gm$ is cohomologically affine. 
	\end{proof}
    

	\begin{lemma}\label{lemma-key}
		Let $A$ be a ring and let $f_0,...,f_n$ be a Koszul-regular sequence in $A$. Then for any tuple $(d_0,...,d_n)$ of positive integers, the following sequence \[f_0-x_0s^{d_0},...,f_n - x_ns^{d_n} \] in the polynomial ring $A[s,x_0,...,x_n]$ is also Koszul-regular.
	\end{lemma}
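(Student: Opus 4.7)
The plan is to set $B := A[s, x_0, \ldots, x_n]$, write $g_i := f_i - x_i s^{d_i}$, and study the Koszul complex $K_\bullet := K(g_0, \ldots, g_n; B)$. Koszul-regularity of $(g_\bullet)$ amounts to $H_i(K_\bullet) = 0$ for every $i \geq 1$. I would verify this after inverting $s$ and separately after reducing modulo $s$, then combine both pieces via the multiplication-by-$s$ short exact sequence on $K_\bullet$.

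First I would handle the situation over $B[s^{-1}]$. The key is the change of variables $y_i := x_i - f_i s^{-d_i}$: since each $s^{d_i}$ is a unit there, this defines a ring automorphism $B[s^{-1}] \cong A[s, s^{-1}][y_0, \ldots, y_n]$ under which $g_i$ is sent to $-s^{d_i} y_i$. Now $y_0, \ldots, y_n$ is a regular (hence Koszul-regular) sequence of independent polynomial variables, and multiplying each entry by a unit preserves Koszul-regularity (the one-variable Koszul complexes $R \xrightarrow{v} R$ and $R \xrightarrow{uv} R$ are isomorphic via multiplication by $u$, and Koszul complexes are built by tensoring). Hence $(g_\bullet)$ is Koszul-regular in $B[s^{-1}]$. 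Next I would treat the reduction modulo $s$: since $g_i \bmod s = f_i$, we have $K_\bullet / s K_\bullet = K(f_0, \ldots, f_n; A[x_0, \ldots, x_n])$, and Koszul-regularity of $(f_\bullet)$ in $A$ is preserved under the flat extension $A \to A[x_0, \ldots, x_n]$, so this quotient Koszul complex is acyclic in positive degrees.

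To glue the two pieces, note that because $K_\bullet$ consists of free $B$-modules and $s$ is a non-zerodivisor of $B$, the sequence $0 \to K_\bullet \xrightarrow{s} K_\bullet \to K_\bullet/sK_\bullet \to 0$ is exact. The induced long exact sequence in homology, combined with the vanishing of $H_j(K_\bullet/sK_\bullet)$ for $j \geq 1$ from the previous paragraph, forces multiplication by $s$ to act as an isomorphism on $H_i(K_\bullet)$ for every $i \geq 1$. Thus $H_i(K_\bullet) \cong H_i(K_\bullet)[s^{-1}] = H_i(K_\bullet[s^{-1}])$, which vanishes by the $s$-inverted step. The only substantive insight in this plan is spotting the change of variables $x_i \mapsto y_i + f_i s^{-d_i}$ that simultaneously linearises all the $g_i$; once that is in hand, everything else is routine manipulation of Koszul complexes.
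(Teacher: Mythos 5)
Your proof is correct and follows essentially the same strategy as the paper's: invert $s$ and linearise the $g_i = f_i - x_i s^{d_i}$ by a change of variables, then use the multiplication-by-$s$ long exact sequence on the Koszul complex $K_\bullet$ to conclude that its higher homology vanishes. The only (immaterial) difference is that you identify the cokernel of $\times s$ with $K_\bullet/sK_\bullet = K(f_0,\dots,f_n; A[x_0,\dots,x_n])$ and invoke flatness of $A \to A[x_0,\dots,x_n]$, whereas the paper reaches the same acyclicity by noting that $(g_0,\dots,g_n,s)$ generates the same ideal as the Koszul-regular sequence $(f_0,\dots,f_n,s)$.
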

	\begin{proof}
		By \cite[\href{https://stacks.math.columbia.edu/tag/0669}{Tag 0669}]{stax}, it follows that $f_0,...,f_n, s$ is a Koszul-regular sequence in $A[s,x_0,...,x_n]$, and hence $f_0-x_0s^{d_0},...,f_n - x_ns^{d_n} , s$ is also Koszul-regular, since it generates the same ideal. Let us write \[K_\bullet := K_\bullet(f_0-x_0s^{d_0},...,f_n - x_ns^{d_n}; A[s,x_0,...,x_n] ) \] and \[K_\bullet' := K_\bullet(f_0-x_0s^{d_0},...,f_n - x_ns^{d_n} , s; A[s,x_0,...,x_n]). \] We have a distinguished triangle in $\mathrm{D}(A[s,x_0,...,x_n])$ of Koszul complexes: \[K_\bullet \xrightarrow{\times s}K_\bullet\rightarrow K_\bullet' \rightarrow K_\bullet[1], \] and by assumption, $H_r(K'_\bullet) = 0$ for all $r>0$. In particular, multiplication by $s$ induces an isomorphism on $H_r(K_\bullet)$ for all $r > 0$. 
		
		Now note that if we invert $s$, then the change-of-variables $x_i \mapsto f_i - x_is^{d_i}$ becomes invertible, and hence there is a ring automorphism of $A[s^{\pm1},x_0,...,x_n]$ taking $f_0-x_0s^{d_0},...,f_n - x_ns^{d_n}$ to the Koszul-regular sequence $x_0,...,x_n$. Thus for any $r > 0$ we have \[H_r(K_\bullet)\otimes A[s^{\pm 1},x_0,...,x_n]= H_r(K_\bullet \otimes A[s^{\pm 1},x_0,...,x_n]) = 0, \] and in particular every element of $H_r(K_\bullet)$ is annihilated by some power of $s$. But as established, multiplication by $s$ is an isomorphism on $H_r(K_\bullet)$, and hence we must have $H_r(K_\bullet) = 0$ as desired.
	\end{proof}
    \begin{lemma}\label{lemma: local-coho-calc-Rees}
        Let $R$ be a ring and let $(f_0, d_0)+...+(f_n, d_n)$ be a strictly Koszul-regular weighted centre, with extended Rees algebra $\cal{A} = R[t^{-1}, x_0,...,x_n]/(f_i-x_it^{-d_i})$. Letting $Z = V(x_0,...,x_n)\subseteq \Spec \cal{A}$, we have \[R\Gamma_Z(\O_{\Spec \cal{A}}) = \frac{1}{x_0...x_n}R[t^{-1},x_0^{- 1},...,x_n^{- 1}] /(f_i-x_it^{-d_i})[ -n-1].  \]
    \end{lemma}
    \begin{proof}
        By \cite[\href{https://stacks.math.columbia.edu/tag/0ALZ}{Tag 0ALZ}]{stax}, and Lemma \ref{lemma-cohomology-punctured-affine}, we have \[R\Gamma_Z(\O_{\Spec \cal{A}}) = \frac{1}{x_0...x_n}R[t^{-1},x_0^{-1},...,x_n^{-1}][-n-1] \ltensor_{R[t^{-1},x_0,...,x_n]}  R[t^{-1}, x_0,...,x_n]/(f_i-x_it^{-d_i}).  \] Thus it suffices to show that $f_0-x_0t^{-d_0},...,f_n-x_nt^{-d_n}$ is a Koszul-regular sequence for the $R[t^{-1},x_0,...,x_n]$-module $\frac{1}{x_0...x_n}R[t^{-1},x_0^{-1},...,x_n^{-1}]$.
        
       Set $M := \frac{1}{x_0...x_n}R[t^{-1},x_0^{-1},...,x_n^{-1}]$ and let us consider the Koszul complex \[K_\bullet = K_\bullet(f_0-x_0t^{-d_0},...f_n-x_nt^{-d_n}; M). \] We define a filtration on $K_\bullet$: for $p\in \Z$, we let $F^pK_\bullet$ consist of the subcomplex of $R[t^{-1}]$-modules spanned by the monomials $x_{I}^{a_{I}}$, where $|a_I| \geq p$. So $F^pK_\bullet = 0$ for all $p \geq -n$. It is clear the filtrations respect the differentials, making $K_\bullet$ a filtered complex.

       Now let us consider the associated spectral sequence, which has $E_0$-page  $E_0^{p,q} = \operatorname{gr}^pK_{-p-q}$. By construction, the $\operatorname{gr}^pK_{-p-q}$ are free $R[t^{-1}]$-modules, and moreover, since the $x_i$ are killed, each $E_0^{p, \bullet}[-p]$ is simply the Koszul complex of the $R[t^{-1}]$-Koszul-regular sequence (by Lemma \ref{lemma-key}) $f_0-x_0t^{-d_0},..., f_n-x_nt^{-d_n}$ tensored with the free module spanned by the monomials $x_I^{a_I}$ with $|a_I| = p$. Thus $E_1^{p,q} = 0$ unless $q = -p$, and hence the spectral sequence degenerates here. Since our filtration is bounded above and exhaustive the spectral sequence converges, which concludes the proof.
    \end{proof}
    
    \begin{corollary}\label{cor: coho-calc-Rees}
        Let $R$ be a ring and let $\cal{A} = (f_0,d_0)+...+(f_n, d_n)$ be a strictly Koszul-regular weighted centre with extension $\cal{A}$. Let $f:\tilde{X} = \Pproj \cal{A}\rightarrow X = \Spec R$ denoted the weighted blowup. Then \[R\Gamma(\tilde{X}, \O(r)) = \cal{A}_r \oplus
            \left(\frac{1}{x_0...x_n}R[t^{-1},x_0^{- 1},...,x_n^{- 1}] /(f_i-x_it^{-d_i})_r\right) [-n]. \]
    \end{corollary}
    \begin{proof}
        Direct from Lemma \ref{lemma: local-coho-calc-Rees}, Lemma \ref{lemma-cohomology-punctured-affine} and that $B\Gm$ is cohomologically affine.
    \end{proof}

    \begin{proof}[Proof of Proposition \ref{prop-sga6}]
		 By the projection formula, it suffices to show $R\pi_*\O_{\tilde{X}} = \Ox$. The question is local, so we may assume $X$ is affine and the Rees algebra is strictly Koszul-regular. The result is then immediate from Corollary \ref{cor: coho-calc-Rees}.
    \end{proof}

	\section{Affine Case}
	In this section, we prove a local special case of Theorem \ref{thm-main} and Theorem \ref{thm: shriek}. Along the way, we establish a weighted analogue of Beilinson's classical decomposition \cite[Theorem]{Beilinson}.
	
	
	\begin{lemma}\label{lemma-lb-are-generators}
		Let $S$ be an algebraic stack such that $\Dqc(S)$ is compactly generated, and fix $\{\E_\alpha\}$ a generating set of compact objects. Let $U$ be a quasi-affine stack with a $\Gm$-action over $S$ and let $X = [U/\Gm]$ denote the resulting quotient stack. Then $\{L\pi^*\E_\alpha(r)\}$ forms a compact generating set for $\Dqc(X)$. Here $\F(r)$ refers to $\F$, twisted by the line bundle $\O(r)$.
	\end{lemma}
	\begin{proof}
		Let us factor $\pi$ as below: \[\begin{tikzcd}
			X\arrow[d, "f"] \arrow[rd, "\pi"]\\
			 B \Gm\times S \arrow[r, "g"] & S
		\end{tikzcd} \] Letting $\F\in \Dqc(X)$ and supposing $R\Hom(L\pi^*\E_\alpha(r), \F) = 0$ for each $\alpha, r$, by adjunction we have \[ 0 = R\Hom(L\pi^*\E_\alpha(r), \F) = R\Hom(Lf^*Lg^*\E_\alpha(r), \F) = R\Hom(Lg^*\E_\alpha(r), Rf_*(\F)).\] Thus by Example \ref{eg-gm-characters-generate} we have $Rf_*\F=0$. But $f$ is quasi-affine, thus $\F=0$.
	\end{proof}
	\begin{lemma}\label{lemma-koszul-beilinson}
		Let $R$ be a $\Z$-graded ring and let $x_0,...,x_n$ be a Koszul-regular sequence of positively graded homogeneous elements, where $x_i$ has degree $d_i > 0$. Let $X = [(\Spec R \setminus V(x_0,...,x_n))/ \Gm]$. If $R\Hom(\O(d), \F) = 0$ for all $1-\sum d_i\leq d \leq 0$, then $\F = 0$, In other words, the object $\O(1-\sum d_i)\oplus...\oplus\O(-1)\oplus\O$ is a generator for $\Dqc(X)$. In particular, $\Dqc(X)$ is generated by a single perfect complex.
	\end{lemma}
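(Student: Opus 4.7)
By Lemma \ref{lemma-lb-are-generators} the set $\{\O(d)\}_{d \in \Z}$ generates $\Dqc(X)$; combined with Lemma \ref{lemma-obvious}, it therefore suffices to show that for every $d \in \Z$, $\O(d)$ lies in the triangulated hull $\langle \O(1-|\mathbf{d}|) \oplus \cdots \oplus \O\rangle$. Since this candidate generator is a finite direct sum of line bundles it is automatically perfect, so establishing the above inclusion will prove both halves of the lemma at once.

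The crucial input is the Koszul complex of $x_0,\dots,x_n$. Because each $x_i$ is homogeneous of positive degree $d_i$, this Koszul complex is naturally a complex of graded $R$-modules whose $p$-th term is $\bigoplus_{|I|=p} R(-\sum_{i \in I} d_i)$, and Koszul-regularity ensures it is a resolution of $R/(x_0,\dots,x_n)$. Passing to sheaves on $X$ kills $R/(x_0,\dots,x_n)$, since its support is contained in $V(x_0,\dots,x_n)$, which we have removed. Hence, twisting by $\O(d)$, we obtain for every $d \in \Z$ an exact sequence on $X$:
\[ 0 \to \O(d-|\mathbf{d}|) \to \bigoplus_j \O(d-|\mathbf{d}|+d_j) \to \cdots \to \bigoplus_i \O(d-d_i) \to \O(d) \to 0. \]

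Breaking this into fiber sequences shows that $\O(d)$ lies in the triangulated hull of the remaining terms, which are twists by integers in $[d-|\mathbf{d}|, d-1]$; symmetrically, by isolating the left-most term instead, $\O(d-|\mathbf{d}|)$ lies in the triangulated hull of twists by integers in $[d-|\mathbf{d}|+1, d]$. With these two ``move-down'' and ``move-up'' relations in hand, a straightforward induction reduces to the tautological base window $d \in [1-|\mathbf{d}|, 0]$: for $d > 0$ the first relation expresses $\O(d)$ in terms of twists of strictly smaller index, all of which still lie in $[1-|\mathbf{d}|, d-1]$; for $d < 1-|\mathbf{d}|$ the second relation expresses $\O(d)$ in terms of twists of strictly larger index, all of which lie in $[d+1, 0]$. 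The only substantive ingredient is the Koszul exact sequence above; the inductive bookkeeping is then routine.
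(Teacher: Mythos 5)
Your proposal is correct and follows essentially the same route as the paper: both reduce via Lemma \ref{lemma-lb-are-generators} and Lemma \ref{lemma-obvious} to placing every $\O(d)$ in the triangulated hull of $\O(1-\sum d_i),\dots,\O$, and both do so by twisting the graded Koszul resolution of $x_0,\dots,x_n$ (exact on $X$ because its zero locus is removed) and inducting in both directions. Your explicit ``move-down''/``move-up'' bookkeeping is just a more detailed spelling-out of the paper's ``applying this inductively.''
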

	\begin{proof}
		Observe we have the following exact sequence: \begin{equation}\label{eqn-alternative-to-beilinson}
			0 \rightarrow \O(-\sum d_i) = \wedge^{n+1 }( \bigoplus_{i = 0}^n \O(-d_i) )\rightarrow ... \rightarrow \bigoplus_{i = 0}^n \O(-d_i) \rightarrow \O \rightarrow 0.
		\end{equation} 
		Indeed, this is just the (appropriately graded) Koszul resolution associated to the Koszul-regular sequence $x_0,...,x_n$, whose vanishing locus is removed by assumption, hence the sequence is exact. In particular, this tells us $\O$ is quasi-isomorphic to a complex consisting of direct sums of sheaves of the form $\O(-r)$ for $-\sum d_i \leq -r \leq -1$ and so $\O(1)$ is quasi-isomorphic to a complex consisting of direct sums of sheaves of the form $\O(-r)$ for $1-\sum d_i \leq -r \leq 0$ (by twisting (\ref{eqn-alternative-to-beilinson})). Applying this inductively, this means every $\O(s)$ can be obtained by a finite number of extensions of the $\O(1-\sum d_i),...,\O(-1), \O$, or in other words $\O(s)\in \langle \O(1-\sum d_i)\oplus...\oplus\O(-1)\oplus\O \rangle$ for all $s\in \Z$. We win, by Lemma \ref{lemma-lb-are-generators} and Lemma \ref{lemma-obvious}.
	\end{proof}
	
	\begin{corollary}\label{cor-weighted-beilinson-affine}
		Let $X = \Spec R$ be an affine scheme and let \[\pi:\scr{P}=\Pproj R[x_0,...,x_n]\rightarrow X\] be the weighted projective stack over $R$ with weights $1 \leq d_0 \leq...\leq d_n$. Then there is a semiorthogonal decomposition \[\Dqc(\scr{P}) = \langle \O_{\scr{P}}(1-\sum d_i)\otimes \pi^*\Dqc(X) ,...,\O_{\scr{P}}(-1)\otimes \pi^*\Dqc(X), \pi^*\Dqc(X)\rangle, \] where $\O_{\scr{P}}(r)\otimes\pi^*\Dqc(X)$ is the full triangulated subcategory consisting of objects of the form $\O_{\scr{P}}(r)\otimes\pi^*(\F)$ where $\F\in \Dqc(X)$.
	\end{corollary}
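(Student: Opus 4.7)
The plan is to verify the three defining properties of a semi-orthogonal decomposition in turn. Let $\Phi_r: \Dqc(X) \to \Dqc(\scr{P})$ denote the functor $\F \mapsto \O_{\scr{P}}(r) \otimes \pi^*\F$, with obvious right adjoint $\Psi_r(\G) = \pi_*(\G \otimes \O_{\scr{P}}(-r))$, for $r \in \{1-|\mathbf{d}|, \ldots, 0\}$. Using the projection formula together with the identity $\pi_*\O_{\scr{P}} = \O_X$ (the $i = r = 0$ case of Lemma \ref{lemma-cohomology-line-bundles}), the unit $\F \to \Psi_r\Phi_r\F$ is an isomorphism, so $\Phi_r$ is fully faithful. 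The inclusion $\im\Phi_r \hookrightarrow \Dqc(\scr{P})$ then inherits a right adjoint given by $\Phi_r \circ \Psi_r$, so each piece is right-admissible.

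For semi-orthogonality, suppose $r > s$ in the above range. Adjunction and the projection formula yield
\[ R\Hom_{\scr{P}}\bigl(\O_{\scr{P}}(r) \otimes \pi^*\F,\; \O_{\scr{P}}(s) \otimes \pi^*\G\bigr) \cong R\Hom_X\bigl(\F,\; \pi_*\O_{\scr{P}}(s-r) \otimes \G\bigr). \]
The twist $s - r$ lies in $\{1 - |\mathbf{d}|, \ldots, -1\}$, and Lemma \ref{lemma-cohomology-line-bundles} forces $\pi_*\O_{\scr{P}}(s-r) = 0$ throughout this range: the polynomial summand vanishes because $s - r < 0$, while the \enquote{top} summand $\frac{1}{x_0 \cdots x_n} \O_S[x_0^{-1}, \ldots, x_n^{-1}]$ has maximum degree $-|\mathbf{d}|$ (attained by $(x_0 \cdots x_n)^{-1}$ itself), which is strictly below $s - r \geq 1 - |\mathbf{d}|$.

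For fullness I would appeal to Lemma \ref{lemma-semiorth-full-generator}: it is enough to exhibit a generator of $\Dqc(\scr{P})$ contained in the triangulated hull of the subcategories $\im\Phi_r$. The sequence $x_0, \ldots, x_n$ is (strictly) Koszul-regular in $R[x_0, \ldots, x_n]$, so Lemma \ref{lemma-koszul-beilinson} provides the generator $\O_{\scr{P}}(1-|\mathbf{d}|) \oplus \cdots \oplus \O_{\scr{P}}(-1) \oplus \O_{\scr{P}}$ of $\Dqc(\scr{P})$. Each summand $\O_{\scr{P}}(r) = \Phi_r(\O_X)$ manifestly lies in $\im\Phi_r$, so the generator sits in the hull and Lemma \ref{lemma-semiorth-full-generator} completes the proof.

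The only substantive step is the grading bookkeeping verifying $\pi_*\O_{\scr{P}}(m) = 0$ for $1 - |\mathbf{d}| \leq m \leq -1$; everything else follows formally from the lemmas already in place, so I do not anticipate any serious obstacle.
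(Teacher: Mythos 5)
Your proposal is correct and follows essentially the same route as the paper: full faithfulness and semi-orthogonality via adjunction, the projection formula and the cohomology computation of Lemma \ref{lemma-cohomology-line-bundles}, and fullness via Lemma \ref{lemma-koszul-beilinson} combined with Lemma \ref{lemma-semiorth-full-generator}. The degree bookkeeping you carry out for $\pi_*\O_{\scr{P}}(m)$ with $1-|\mathbf{d}|\leq m\leq -1$ is exactly the content the paper delegates to that lemma, so there is nothing to add.
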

	\begin{proof}
		First note that, by the projection formula and Lemma \ref{lemma-cohomology-line-bundles} (ii), the functor $\pi^*: \Dqc(\Spec R) \rightarrow \Dqc(\scr{P})$ (and hence $\pi^*(-)\otimes \O(r)$, since twisting is an equivalence.) is fully faithful. Moreover each $\O_{\scr{P}}(-r)\otimes \pi^*\Dqc(X)$ is right admissible (i.e. the inclusion has a right adjoint); indeed, the functor \[\F \mapsto \O_{\scr{P}}(-r)\otimes\pi^*(R\Gamma(\F \otimes \O_{\scr{P}}(r))) \] is the right adjoint. 
		
		Now we show that this is indeed a semi-orthogonal decomposition. Indeed, if $1-\sum d_r \leq -i < -j \leq 0$ it follows
		\begin{align*}
			R\Hom(\pi^*\F \otimes \O(-j),\pi^*\G \otimes \O(-i) ) &= R\Hom(\pi^*\F, \pi^*\G \otimes \O(-i + j))  \\&= R\Hom(\F, R\Gamma(\O(-i+j)) \otimes \G)
		\end{align*} by the projection formula. Since $R\Gamma(\O(-i+j)) = 0$ by Lemma \ref{lemma-cohomology-line-bundles}, it follows that \[R\Hom(\pi^*\F \otimes \O(-j),\pi^*\G \otimes \O(-i) ) = 0. \] In particular the sequence is semi-orthogonal.	The fact that it is full now follows directly from Lemma \ref{lemma-koszul-beilinson} and Lemma \ref{lemma-semiorth-full-generator}.
	\end{proof}
	\begin{remark}
		Of course, (modulo boundedness and coherence conditions) this generalises the well-known semi-orthogonal decomposition of a projective space (due to Beilinson \cite{Beilinson} over a field and Orlov \cite{OrlovThesis} in the general case) to the case of a weighted projective stack. Over a field this is done in \cite[Theorem 2.2.4]{WeightedBeilinsonPre}. 
	\end{remark}
	\begin{proposition}\label{prop-main-affine}
		Let $X = \Spec R$ be an affine scheme and let $\I_\bullet = \sum_{i = 0}^n (f_i, d_i)$ be a strictly Koszul-regular weighted centre on $X$ with associated extended Rees algebra $\cal{A}$. Write $Z_r$ for $V(\cal{I}_r)$, the closed subscheme corresponding to $\cal{I}_r$. Let $\tilde{X} $ be the weighted blowup of $\I_\bullet$ and let $E$ be the exceptional divisor, which is the weighted projective stack $E =\scr{P}_{Z_1}(d_0,...,d_n)$ over $Z_1$ with $1 \leq d_0 \leq...\leq d_n$ by Proposition \ref{prop-twvb-iff-quasiregular}. We have a commutative diagram: \[\begin{tikzcd}
			E \arrow[r, hook,"j"]\arrow[d, "p"] & \tilde{X} \arrow[d, "\pi"] \\
			Z _1\arrow[r,hook, "i"] & X.
		\end{tikzcd} \] Then there is a semi-orthogonal decomposition: \[\Dqc(\tilde{X}) = \langle j_*(p^*\Dqc(Z_1)\otimes\O_E(1- \sum d_i)),...,j_*(p^*\Dqc(Z_1)\otimes\O_E(-1)), L\pi^*\Dqc(X) \rangle. \]
	\end{proposition}
	\begin{proof}
		First note that all relevant functors from $\Dqc(Z_1)$ and $\Dqc(X)$ to $\Dqc(\X)$ are fully faithful - indeed $\pi^*$ is fully faithful by Proposition \ref{prop-sga6} and $p^*$ is fully faithful by Lemma \ref{lemma-cohomology-line-bundles}. While $j_*$ will in general not be fully faithful, it is fully faithful on each subcategory $p^*\Dqc(Z_1)\otimes\O_E(r)$, indeed the argument is identical to the corresponding argument in the proof of \cite[Theorem 6.9]{ConservDesc}. Ditto for the argument that the sequence is semi-orthogonal (here Corollary \ref{cor-weighted-beilinson-affine} is needed). They all clearly admit right adjoints.

		We just need to show that the sequence is full. Now we have the description \[\tilde{X} = [\Spec R[t^{-1}, x_0,...,x_n]/(f_0 - x_0t^{-1},...,f_n - x_nt^{-1}) \setminus V(x_0,...,x_n) / \Gm]. \] Thus by Lemma \ref{lemma-koszul-beilinson} and Lemma \ref{lemma-semiorth-full-generator}, it suffices to show that $\O_{\tilde{X}}(r)$ for $1-\sum d_i\leq r \leq 0$ is contained in the triangulated hull of the subcategories $j_*(p^*\Dqc(Z_1)\otimes\O_E(1- \sum d_i)),...,j_*(p^*\Dqc(Z_1)\otimes\O_E(-1)), \pi^*\Dqc(X)$. Let us denote this triangulated hull $\sf{T}$. Clearly $\O_{\X}\in \sf{T}$. Now for $1 - \sum d_i \leq r \leq -1$, by assumption $j_*\O_E(r)\in \sf{T}$. But $$j_*\O_E(r) = [\O_{\X}(r+1)\rightarrow \O_{\X}(r)],$$ so by a downwards induction argument, we see that $\O_{\X}(r)\in \sf{T}$ as well, as desired.
	\end{proof}	
    
    \begin{example}[{\cite[Example 5.4.4]{QuekRydh}}]\label{eg: fkn-notation}
        Working over a ring $k$, consider $X = \Spec k[x,y]$. Letting $\tilde{X}$ denote the blowup of $X$ along the weighted centre $(x,3) + (y,2)$, i.e. $$\tilde{X} = \Pproj k[x,y][u,v, t^{-1}]/(x-ut^{-3}, y-vt^{-2})= \Pproj k[u,v, t^{-1}]$$ where $\deg u = 3, \deg v = 2$, we denote by $\tilde{X}(\sqrt[3]{u}, \sqrt[2]{v})$ the blowup of $\tilde{X}$ along the centre $(u,3)$ followed by $(v,2)$ (i.e. we are taking two root stacks). Then we may also obtain $\tilde{X}(\sqrt[3]{u}, \sqrt[2]{v})$ by blowing up $X(\sqrt[3]{x}, \sqrt[2]{y}) = \Bl_{(x,3)}\Bl_{(y,2)}(X)$ along the (unweighted) intersection of the exceptional divisors. Thus we have the following commutative diagram \[\begin{tikzcd}[row sep = huge]
            \tilde{X}(\sqrt[3]{u}, \sqrt[2]{v}) \arrow[r] \arrow[d,swap,  "\text{usual blowup}"] & \tilde{X} \arrow[d, "\text{weighted blowup}"] \\
             X(\sqrt[3]{x}, \sqrt[2]{y})\arrow[r] & X,
        \end{tikzcd} \]where the horizontal arrows are both sequences of root stacks. In particular, we have exhibited $\tilde{X}(\sqrt[3]{u}, \sqrt[2]{v})$ as a sequence of weighted blowups of $X$ in two different ways, and thus $\Dqc(\tilde{X}(\sqrt[3]{u}, \sqrt[2]{v}))$ should admit two different semi-orthogonal decompositions. 

        
        Indeed, on one hand (going through the weighted blowup) we have a decomposition with terms 
        \begin{align*}
            \Dqc(\tilde{X}(\sqrt[3]{u}, \sqrt[2]{v})) = \langle&\Dqc(\pt),\Dqc(\A^1),\Dqc(\pt),\Dqc(\A^1),&\text{rooting }u=0\\ &\Dqc(\pt),\Dqc(\pt), 
            \Dqc(\A^1),&\text{rooting }v=0\\ &\underbrace{\Dqc(\Spec k),\dots \Dqc(\Spec k)}_{5 \text{ copies}},\Dqc(X) \rangle &\text{the weighted blowup}. 
        \end{align*}
        On the other hand (going through the usual blowup) we have a decomposition with terms 
        \begin{align*}
            \Dqc(\tilde{X}(\sqrt[3]{u}, \sqrt[2]{v})) = \langle& 
            \Dqc(B\mu_6) = \Dqc(\pt)^{\oplus 6},&\text{ the usual blowup}\\&\Dqc(\pt),\Dqc(\A^1), \Dqc(\pt),\Dqc(\A^1) &\text{rooting }x=0\\ 
            &\Dqc(\pt),\Dqc(X) \rangle &\text{rooting }y=0. 
        \end{align*}
        Repeated application of \cite[Theorem 1.2]{donovan_root_stax} whilst keeping track of the twists will relate these two decompositions. To avoid an explosion of notation, we leave the details to the reader.

        
    \end{example}
    Next, we prove Theorem \ref{thm: shriek} in the affine case.
    \begin{definition}
        Let $R$ be a ring and let $\sum (f_i, d_i)$ be a strictly Koszul-regular weighted centre, with extended Rees algebra $\cal{A} = R[t^{-1}, x_0,...,x_n]/(f_i-x_it^{-d_i})$. Let $f: \tilde{X} = \Pproj \cal{A} \rightarrow X = \Spec R$ be the weighted blowup. We define the \textit{trace map} associated to $f$ to be the isomorphism $$\int_f: R\Gamma(\tilde{X}, \O(1-\sum d_i)) = H^0(\tilde{X}, \O(1-\sum d_i)) \xrightarrow{\sim} R$$sending $t^{1-\sum d_i}$ to 1.
    \end{definition}
     \begin{lemma}\label{lemma: duality-compact-generators}
        Let $Rf_*: \sf{T}' \rightarrow \sf{T}$ be a coproduct-preserving triangulated functor between two triangulated categories which contain all coproducts, and suppose $\sf{T'}$ is compactly generated. Let $t\in T$, and suppose we have a natural transformation \[\eta(-): \Hom_{\sf{T}'}(-, \E) \rightarrow \Hom_{\sf{T}}(Rf_*(-), t) \] for some object $\E$ of $\sf{T'}$. Now suppose there is a set $\Omega$ of compact generators of $\sf{T}'$ closed under shifts such that $\eta(\L)$ is an isomorphism for all $\L\in \Omega$. Then $\eta$ is an isomorphism.
    \end{lemma}
    \begin{proof}
        By Brown representability (\cite[Theorem 4.1]{neeman_duality}), $Rf_*$ admits a right adjoint $f^!$. Set $\F = f^!(t)$. Then by Yoneda's lemma, we have a map $\E \rightarrow \F$, which we complete to a triangle \[\E \rightarrow \F \rightarrow \cal{G} \rightarrow \E[1].\] Applying the cohomological functor $\Hom(\L, -)$ tells us $\Hom(\L, \G) = 0$ for all $\L\in \Omega$, whence $\G=0$.
    \end{proof}
    
    \begin{proposition}\label{prop: shriek-affine}
        Let $R$ be a ring and let $(f_0, d_0)+...+(f_n, d_n) =: I_\bullet$ be a strictly Koszul-regular weighted centre, with extended Rees algebra $\cal{A} = R[t^{-1}, x_0,...,x_n]/(f_i-x_it^{-d_i}$. Let $f: \tilde{X} = \Pproj \cal{A} \rightarrow X = \Spec R$ be the weighted blowup. Let $f^!: \Dqc(X) \rightarrow \Dqc(\tilde{X})$ denote the right adjoint of $Rf_*$ (which exists by Brown representability and the observation that $\tilde{X}$ is clearly concentrated). Then the trace map induces an isomorphism  \[f^!(-) \iso f^*(-) \otimes \O(1-\sum d_i) \] of functors.
    \end{proposition}
    \begin{proof}
        Lemma \ref{lemma-lb-are-generators} and Corollary \ref{cor: coho-calc-Rees} in combination with \cite[Theorem 5.1]{neeman_duality} tell us that $f^!$ preserves coproducts, and thus by \cite[Theorem 5.4]{neeman_duality} (it is easy to check the proof applies in this situation) we know $f^!$ is of the form $f^! = f^* \otimes \omega$.

        The trace map induces a morphism of functors \[\eta: R\Hom_{\tilde{X}}(- , \O_{\tilde{X}}(1-\sum d_i )) \rightarrow R\Hom_R(R\Gamma(\tilde{X}, -), R). \] By Lemmas \ref{lemma-lb-are-generators} and \ref{lemma: duality-compact-generators}, it suffices to show that verify that the maps $\eta(\O_{\tilde{X}}(r))$ are isomorphisms. For $r= 1-\sum d_i$ this follows from Corollary \ref{cor: coho-calc-Rees}, and letting $j: E \hookrightarrow \tilde{X}$ denote the inclusion of the exceptional divisor, using the triangle \[\O_{\tilde{X}}(r+1) \rightarrow \O_{\tilde{X}}(r) \rightarrow j_*\O_E(r) \rightarrow \O_{\tilde{X}}(r+1)[1] \] and the five-lemma we reduce to checking that $\eta(\O_E(r))$ is an isomorphism for all $r$.

        By \cite[Theorem 4.14]{perfect_stax}, $j^!$ also preserves coproducts, and we can calculate \[j^!\O_{\tilde{X}} = R\Hom(j_*\O_E, \O_{\tilde{X}}) = [\O_{\tilde{X}} \rightarrow \O_{\tilde{X}}(-1)] = \O_E(-1)[-1]\] (where the final isomorphism should be thought of as an isomorphism of dg-rings). The counit map $$j_*j^!\O_{\tilde{X}} = j_*\O_E(-1)[-1] \rightarrow \O_{\tilde{\X}}$$ is simply the map induced by the short exact sequence \[ 0 \rightarrow \O(1) \rightarrow \O \rightarrow j_*\O_E \rightarrow 0. \] Thus we have induced maps \[R\Hom_E(\O_E(r), \O(-\sum d_i)[-1]) \rightarrow R\Hom(R\Gamma(E, \O_E(r)), R). \] Using Lemma \ref{lemma-cohomology-line-bundles} and exhaustive checking (noting that $R\Gamma(E, \O_E(r))$ is a split complex of free $R/f_0,...,f_n$-modules), we may verify they are isomorphisms, which concludes the proof.

    \end{proof}
	\section{Reduction to the Affine Case}
	We begin this section by proving Theorem \ref{thm-main}. We apply the theory of conservative descent developed in \cite{ConservDesc} to reduce to the affine case. Let us fix the situation:
	\begin{sitch}\label{sitch}
		Fix $(d_0,...,d_n)$ a tuple of positive integers. Let $X$ be a quasi-compact algebraic stack and let $\I_\bullet$ be a Koszul-regular weighted centre on $X$, that smooth-locally is of the form $\sum_{i = 0}^n (f_i, d_i)$, let $\X$ denote the blowup and let $E$ denote the exceptional divisor. Write $Z_i = V(\cal{I}_i)$. We have a commutative diagram: \[\begin{tikzcd}
			E \arrow[r, hook,"j"]\arrow[d, "p"] & \tilde{X} \arrow[d, "\pi"] \\
			Z _1\arrow[r,hook, "i"] & X.
		\end{tikzcd} \] 
	\end{sitch}
	\begin{lemma}\label{lemma-fourier-mukai-1}
		In Situation \ref{sitch}, the quadruple $(\tilde{X}, \O_{\tilde{X}}, \pi, \id)$ is a Fourier-Mukai transform (see Definition \ref{def-fourier-mukai}) over $X$.
	\end{lemma}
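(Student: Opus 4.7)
The associated functor is $\F \mapsto q_*(p^*\F \otimes \cal{P}) = \id_*(\pi^*\F \otimes \O_{\X}) = \pi^*\F$, so this Fourier--Mukai transform merely encodes $\pi^*$. The plan is to verify conditions (i), (ii), (iii) of Definition \ref{def-fourier-mukai}; the key observation is that $q = \id$ trivialises (ii) and (iii), leaving (i) as the only real content.

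For (ii) we have $q^!\O_Y = \id^!\O_{\X} = \O_{\X}$, which is (trivially) perfect. For (iii), any flat base change $u: Y' \to Y$ induces a cartesian square in which $K' = Y'$, $q' = \id$ and $v = u$, so both of the natural transformations in the definition are identities.

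Condition (i) requires $\pi$ to be perfect, proper and concentrated (the analogous claims for $q = \id$ being trivial). All three properties are smooth-local on $X$, so I would reduce to the strictly Koszul-regular affine case $X = \Spec R$, $\cal{A} = \sum (f_i, d_i)$. Properness follows from the original description $\X = \Pproj_{\Ox} \cal{A}$ together with the positive grading and finite generation of $\cal{A}$, referring to \cite{QuekRydh}. For perfectness and concentratedness, I would pass to the extended Rees algebra: by Proposition \ref{prop-blowup-extended-rees} and Example \ref{eg-h1-regular-extended-rees}, $\X$ is the stacky $\Pproj$ of $R[x_0,\ldots,x_n,t^{-1}]/(f_i - x_it^{-d_i})$ with $\Gm$-weights $(d_0,\ldots,d_n,-1)$, and Lemma \ref{lemma-key} guarantees that $f_i - x_it^{-d_i}$ is Koszul-regular in $R[x_0,\ldots,x_n,t^{-1}]$. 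Thus $\X$ embeds as a Koszul-regular closed substack of the ambient stack $[(\A^{n+2}_R \setminus V(x_0,\ldots,x_n))/\Gm]$, which is smooth (and smooth-locally the total space of a line bundle on a weighted projective stack, hence concentrated) over $X$; since Koszul-regular closed immersions are perfect and closed immersions are affine, $\pi$ inherits perfectness and concentratedness. The only genuinely non-formal input is the Koszul-regularity of this embedding, which is exactly the content of Lemma \ref{lemma-key}.
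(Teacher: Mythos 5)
Your proposal is correct and follows essentially the same route as the paper: conditions (ii) and (iii) are trivial because $q = \id$, properness and concentratedness of $\pi$ are handled by citation/local inspection, and perfectness is obtained by factoring $\pi$ through the Koszul-regular (Lemma \ref{lemma-key}) closed immersion of $\X$ into $\Pproj \Ox[t^{-1},x_0,\dots,x_n]$, which is smooth over $X$. The only cosmetic difference is that the paper cites \cite[Theorem C]{alg_gps_compact} and \cite[Lemma 1.1.2]{QuekRydh} for concentratedness rather than reading it off the ambient stack, but your argument for that point is also fine.
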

	\begin{proof}
		Only condition \ref{condition-1} for $\pi$ needs to be checked, and it can be checked smooth-locally, so we may assume $X$ is affine and the centre is strictly Koszul-regular, say $\sum_{i = 0}^n (f_i, d_i)$. Now $\pi$ is proper by \cite[Proposition 1.6.1]{QuekRydh} and concentrated by \cite[Theorem C]{alg_gps_compact} and \cite[Lemma 1.1.2]{QuekRydh}. The fact that $\pi$ is perfect follows from Lemma \ref{lemma-key}. 
	\end{proof}
	\begin{lemma}\label{lemma-fourier-mukai-2}
		In Situation \ref{sitch} the quadruple $(E, \O_E(r), p, j)$ is a Fourier-Mukai transform over $X$ for all $r\in \Z$.
	\end{lemma}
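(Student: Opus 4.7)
The plan is to verify the three conditions of Definition \ref{def-fourier-mukai} for the quadruple $(E, \O_E(r), p, j)$, taking the base to be $X$. Perfectness of the kernel $\O_E(r)$ is immediate as it is a line bundle. The morphism $j$ is the inclusion of an effective Cartier divisor and the morphism $p$ is a twisted weighted projective stack bundle; both are well-understood, so the proof should be no harder than that of Lemma \ref{lemma-fourier-mukai-1}, with the main obstacle (to the extent there is one) being the verification of concentratedness and perfectness of $p$ after reducing to the smooth-local situation.

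For condition (i) I would handle $j$ and $p$ separately. The morphism $j$ is the inclusion of the effective Cartier divisor $E = V(t^{-1})$, which is automatically proper and affine (hence concentrated); the Cartier condition realises $j$ locally as a Koszul-regular closed immersion cut out by a single nonzerodivisor, so by \cite[\href{https://stacks.math.columbia.edu/tag/068C}{Tag 068C}]{stax} it is perfect. For $p$, Proposition \ref{prop-twvb-iff-quasiregular} identifies $E$ as a twisted weighted projective stack bundle over $Z_1$, so after passing to a smooth cover of $Z_1$ the morphism $p$ becomes the structure map of a weighted projective stack $\scr{P}(d_0,\ldots,d_n)$. Since perfect, proper and concentrated are all smooth-local on the target, this reduces the verification to exactly the situation already treated for $\pi$ in Lemma \ref{lemma-fourier-mukai-1}: properness comes from \cite[Proposition 1.6.1]{QuekRydh}, concentratedness from \cite[Theorem C]{alg_gps_compact} and \cite[Lemma 1.1.2]{QuekRydh}, and perfectness by factoring through a Koszul-regular closed immersion into a smooth morphism.

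For condition (ii) I would invoke the standard formula for the upper shriek of an effective Cartier divisor: since the exceptional divisor has associated line bundle $\O_\X(-1)$, we obtain $j^!\O_\X \cong \O_E(-1)[-1]$, which is perfect as a shift of a line bundle. For condition (iii), both natural transformations are well-known properties of $j^!$ for regular closed immersions. After a flat base change $u: \X' \to \X$, the divisor $E' = E \times_\X \X'$ remains an effective Cartier divisor with normal bundle $v^*N_{E/\X}$, so both $v^*j^! \to (j')^!u^*$ and the projection formula map $(j')^!\O_{\X'} \otimes (j')^*(-) \to (j')^!(-)$ reduce to the explicit twist-and-shift description of $j^!$ and are isomorphisms by flat base change for tensor and pullback.
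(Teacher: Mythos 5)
Your proposal is correct and follows essentially the same route as the paper: condition (i) is checked smooth-locally (where $j$ is the inclusion of an effective Cartier divisor and $p$ becomes a weighted projective stack), condition (ii) is the explicit computation $j^!\O_{\X} = \O_E(-1)[-1]$, and condition (iii) is deduced from that twist-and-shift description together with flat base change. The one small simplification in the paper is that perfectness of $p$ follows immediately from smoothness of the weighted projective stack bundle (via \cite[\href{https://stacks.math.columbia.edu/tag/07EN}{Tag 07EN}]{stax}); no factorisation through a Koszul-regular closed immersion is needed for $p$, that device being reserved for $\pi$ in Lemma \ref{lemma-fourier-mukai-1}.
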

	\begin{proof}
		Condition \ref{condition-1} can be checked smooth-locally on $X$, so we may assume that $X$ is affine and the weighted centre is strictly Koszul-regular. Firstly, $j$ is a closed immersion so it is clearly proper, and concentrated. Moreover, it is an effective Cartier divisor, so it is also perfect by by \cite[\href{https://stacks.math.columbia.edu/tag/068C}{Tag 068C}]{stax}. Now $p$ is a weighted projective stack, by Proposition \ref{prop-twvb-iff-quasiregular}, so clearly it is smooth (hence perfect by \cite[\href{https://stacks.math.columbia.edu/tag/07EN}{Tag 07EN}]{stax}) and proper. Finally, it is concentrated by \cite[Theorem C]{alg_gps_compact} and \cite[Lemma 1.1.2]{QuekRydh}.
		
		Using \cite[Theorem 4.14 (2)]{perfect_stax}, we can explicitly calculate $$j^!\O_{\X} = R\Hom(j_*\O_E, \O) = [\Ox \rightarrow \Ox(-1)] = \O_E(-1)[-1],$$ (which implies \ref{condition-2}. In fact more generally, if $i:D \hookrightarrow Y$ is an effective Cartier divisor, then $i^! \O_Y = \O_D(D)[-1]$.
		
		
		Finally, \ref{condition-4} follows directly from the above observation and \cite[Theorem 4.14 (4)]{perfect_stax}.
	\end{proof}
	\begin{proof}[Proof of Theorem \ref{thm-main}]
		Suppose firstly $\D = \Dqc$. Since $X$ is quasi-compact, there exists a smooth cover by an affine scheme. The result then follows from Lemma \ref{lemma-fourier-mukai-1}, Lemma \ref{lemma-fourier-mukai-2}, Theorem \ref{thm-cons-desc} and Proposition \ref{prop-main-affine}.        
        
		For the other cases, let us introduce an abuse of language: given a triangulated functor $F: \Dqc(Y_1)\rightarrow \Dqc(Y_2)$, we will say $F$ \textit{preserves} $\D$ if $F|_{\D(Y_1)}: \D(Y_1)\rightarrow \Dqc(Y_2)$ factors through $\D(Y_2)$. We now claim that in all other cases, the (induced functors of the) Fourier-Mukai transforms of Lemmas \ref{lemma-fourier-mukai-1} and \ref{lemma-fourier-mukai-2} as well as their right adjoints preserve $\D$. 
		
		Firstly consider $\D = \Perf$. Note that all the relevant functors are compositions of pullbacks, upper-shrieks and pushforwards. Pullbacks always preserve perfect complexes, and our upper-shrieks all decompose as a pullback followed by tensoring with a perfect, by \ref{condition-2} and \ref{condition-4} of Definition \ref{def-fourier-mukai}, and so they also preserve perfects. It thus suffices to show that the pushforwards $Rp_*$, $j_*$ and $R\pi_*$ preserve $\Perf$. Now the question is local, so we may assume $Z_1$ is affine and that $p: E \rightarrow Z_1$ is a weighted projective stack. Lemma \ref{lemma-koszul-beilinson} implies that $\Dqc(E)$ is compactly generated by the complex $\O_E(1-\sum d_i)\oplus....\oplus \O_E$, and we have seen in Lemma \ref{lemma-cohomology-line-bundles} that its pushforward is perfect. \cite[Theorem 5.1]{neeman_duality} now implies that $Rp_*$ preserves $\Perf$. Now $j$ is the inclusion of an effective Cartier divisor, so $j_*$ preserves perfect complexes by \cite[Theorem 4.14 (4)]{perfect_stax} and \cite[Theorem 5.1]{neeman_duality}. Finally, to show $R\pi_*$ preserves $\Perf$, we may assume $X = \Spec R$ is affine and that the centre is strictly Koszul-regular, say $$\cal{A} = (f_0, d_0)+...+(f_n,d_n).$$ Lemma \ref{lemma-koszul-beilinson} tells us that $\Dqc(\X)$ is compactly generated by $\O_{\X}(1-\sum d_i)\oplus....\oplus \O_{\X}$, and we have seen in Proposition \ref{prop-sga6} that $R\Gamma(\X, \O_{\X}) = \O_{X}$. Moreover, by Corollary \ref{cor: coho-calc-Rees}, we have \[R\Gamma(\tilde{X}, \O(r)) = \cal{A}_r \oplus
            \frac{1}{x_0...x_n}R[t^{-1},x_0^{- 1},...,x_n^{- 1}] /(f_i-x_it^{-d_i})_r [-n]. \] for any $ r\in \Z$, and in particular for $1-\sum d_i\leq r  <0 $, it follows that $H^0(\X, \O_{\X}(r)) = R$, and that $H^n(\X, \O_{\X}(r)) = 0$. In particular $R\Gamma(\X, \O_{\X}(r))$ is perfect for any $1-\sum d_i\leq r  \leq0 $. \cite[Theorem 5.1]{neeman_duality} now implies $R\pi_*$ preserves $\Perf$.
		
		Now suppose $X$ is locally noetherian and $\D = \DCoh^-$ i.e. the category of pseudo-coherent complexes. As before, it suffices to show that the pushforwards $Rp_*$, $j_*$ and $R\pi_*$ preserve $\DCoh^-$. But all these morphisms are proper and concentrated, so the result follows from \cite[Theorem 1.2]{proper_coverings} and the $E_2$-hypercohomology spectral sequence. 
		
		Finally, if $X$ is locally noetherian and $\D = \DbCoh$, then all pullbacks preserve $\D$ since $p,j$ and $\pi$ are all perfect. Now we have established $Rp_*$, $j_*$ and $R\pi_*$ preserve $\DCoh^-$ and so it is obvious they preserve $\DCoh^+$, and hence they preserve $\DbCoh$. 
		
		We now make the elementary observation that in all cases, \[j_*p^*\Dqc(Z_1)\otimes\O(r) \cap \D(\X) = j_*p^*\D(Z_1)\otimes\O(r),  \] and \[L\pi^*\Dqc(X) \cap \D(\X) = L\pi^* \D(X). \] Indeed, this follows from the fact that all the Fourier-Mukai transforms of Lemmas \ref{lemma-fourier-mukai-1} and \ref{lemma-fourier-mukai-2} are fully faithful (so their unit maps, as left adjoints, are isomorphisms), and that they preserve $\D$.
		
		The result now follows from another elementary observation: Let $\sf{T}' \subseteq \sf{T}$ be a (triangulated) inclusion of triangulated subcategories and let $\sf{T} = \langle \sf{T}_1, \sf{T}_2 \rangle$ be a semi-orthogonal decomposition of $\sf{T}$. Let $R_i: \sf{T}\rightarrow \sf{T}_i$ denote the right adjoint of the inclusion of $\sf{T}_i$. If $R_i|_{\sf{T}'}: \sf{T}' \rightarrow \sf{T}_i$ factors through $\sf{T}_i \cap \sf{T}'$, then  $\sf{T}' = \langle \sf{T}' \cap \sf{T}_1, \sf{T}'\cap \sf{T}_2\rangle$ is a semi-orthogonal decomposition of $\sf{T}'$.
	\end{proof}
	Let us now turn our attention to Theorem \ref{thm: shriek}. 
    \begin{proof}[Proof of Theorem \ref{thm: shriek}]
        By \cite[Theorem A]{perfect_stax}, it follows that $\Dqc(X)$ is compactly generated. By Lemma \ref{lemma-lb-are-generators} so is $\Dqc(\tilde{X})$. Moreover, $R\pi_*$ preserves compact objects - since $X$ is concentrated this follows from Corollary \ref{cor: coho-calc-Rees}. Thus by Brown representability $\pi^!$ exists, and by \cite[Theorem 5.4]{neeman_duality} (whose proof, which uses only compact generation, applies here), we know $\pi^!$ is of the form \[\pi^!(-) = L\pi^*(-) \ltimes \omega_\pi, \] where $\omega_\pi := \pi^!\Ox$. Moreover, since $X$ has quasi-finite and separated diagonal so does $\tilde{X}$. Indeed, it suffices to check the relative diagonal has these properties, which can be done locally - quasi-finiteness follows from \cite[Lemma 1.1.2]{QuekRydh}, and separation is obvious. Thus by \cite[Lemma 0.1]{neeman_giant_ass_duality}, applied to a smooth cover of $X$, and Proposition \ref{prop: shriek-affine}, it follows $\omega_\pi$ is a line bundle.

        Now let us consider the counit map $\O_{\tilde{X}}\rightarrow \omega_{\pi}$ of the adjunction. This map is injective and the cokernel is supported exactly on $E$ - this can be verified locally, where it is obvious. Thus $\omega_\pi$ is of the form $\O(rE)$. That $r = -1 + \sum d_i$ can be verified locally also.
    \end{proof}
        
	\bibliographystyle{abbrv}
	\bibliography{weighted_orlov}
\end{document}